\documentclass{amsart}
\usepackage{graphicx}

\usepackage[
    a4paper,
    left=1.5cm,
    right=1.5cm,
    top=2.5cm,
    bottom=2.5cm
]{geometry}
\usepackage{amsfonts,amsthm,latexsym,amsmath,amssymb,amscd,amsmath,amsthm,mathrsfs,epsf,stmaryrd}
\usepackage{enumerate}
\usepackage[utf8]{inputenc}

\usepackage[dvipsnames]{xcolor}
\usepackage{tikz}
\usetikzlibrary{arrows,matrix}
\usepackage{blkarray}
\usepackage[linewidth=1pt]{mdframed}
\usepackage{mathtools}

\usepackage{subcaption}
\usepackage{caption}

\usepackage{algorithm}
\usepackage{algorithmicx,algpseudocode}
\algrenewcommand{\algorithmiccomment}[1]{\hspace{-19pt} {\normalsize\color{black}$\triangleright\;$\textit{#1}}}

\usepackage{enumitem}

\usepackage{aliascnt}
\usepackage{hyperref}
\hypersetup{
    colorlinks=true,
    linkcolor=MidnightBlue,
    citecolor=Green
}
\usepackage[nameinlink,noabbrev]{cleveref}

\theoremstyle{plain}

\newaliascnt{lemma}{theorem}
\newtheorem{lemma}[lemma]{Lemma}
\aliascntresetthe{lemma}

\newaliascnt{proposition}{theorem}
\newtheorem{proposition}[proposition]{Proposition}
\aliascntresetthe{proposition}

\newaliascnt{corollary}{theorem}
\newtheorem{corollary}[corollary]{Corollary}
\aliascntresetthe{corollary}

\newaliascnt{definition}{theorem}
\newtheorem{definition}[definition]{Definition}
\aliascntresetthe{definition}

\newaliascnt{example}{theorem}
\newtheorem{example}[example]{Example}
\aliascntresetthe{example}

\newaliascnt{remark}{theorem}
\newtheorem{remark}[remark]{Remark}
\aliascntresetthe{remark}

\newaliascnt{question}{theorem}

\aliascntresetthe{question}

\newaliascnt{alg}{theorem}

\aliascntresetthe{alg}

\crefname{theorem}{Theorem}{Theorems}
\Crefname{theorem}{Theorem}{Theorems}

\crefname{lemma}{Lemma}{Lemmas}
\Crefname{lemma}{Lemma}{Lemmas}

\crefname{proposition}{Proposition}{Propositions}
\Crefname{proposition}{Proposition}{Propositions}

\crefname{corollary}{Corollary}{Corollaries}
\Crefname{corollary}{Corollary}{Corollaries}

\crefname{definition}{Definition}{Definitions}
\Crefname{definition}{Definition}{Definitions}

\crefname{example}{Example}{Examples}
\Crefname{example}{Example}{Examples}

\crefname{remark}{Remark}{Remarks}
\Crefname{remark}{Remark}{Remarks}

\crefname{question}{Question}{Questions}
\Crefname{question}{Question}{Questions}

\crefname{alg}{Algorithm}{Algorithms}
\Crefname{alg}{Algorithm}{Algorithms}

\crefname{claim}{Claim}{Claims}
\Crefname{claim}{Claim}{Claims}

\crefname{equation}{eq.}{eqs.}
\Crefname{equation}{Eq.}{Eqs.}

\renewcommand{\k}{\Bbbk}
\newcommand{\K}{\mathbb{K}}
\newcommand{\Char}{\textnormal{char}\,}

\renewcommand{\S}{\mathcal{S}}

\newcommand{\N}{\mathbb{N}}

\newcommand{\Hom}{\textnormal{Hom}}

\newcommand{\Ann}{\textnormal{Ann}}
\newcommand{\len}{\textnormal{len}\,}
\renewcommand{\P}{\mathbb{P}}
\def\a{\alpha}
\def\b{\beta}
\def\c{\gamma}
\newcommand{\ideal}[1]{\mathopen{}\bigl( #1 \bigr)\mathclose{}}

\newcommand{\contract}{\mathbin{\mathpalette\dointprod\relax}}
\newcommand{\dointprod}[2]{%
  \raisebox{\depth}{\scalebox{1}[-1]{$#1\lnot$}}}

\usepackage{hyperref}
\def\\k{{\mathsf{k}}}
\def\P{{\mathbb{P}}}

\def\<{\left<}
\def\>{\right>}

\def\R{\mathcal{R}}

\def\Z{\mathfrak{Z}}

\def\PGL{\mathrm{PGL}}
\def\GL{\mathrm{GL}}

\def\Ann{\mathrm{Ann}}

\numberwithin{equation}{section}

\begin{document}

\title{Determinantal computation of minimal local GADs}

\author[O. Reig Fité]{Oriol Reig Fité}
\address{Universit\`a di Trento, Via Sommarive, 14 - 38123 Povo (Trento), Italy}
\email{oriol.reigfite@unitn.it}

\author[D. Taufer]{Daniele Taufer}
\address{KU Leuven, Celestijnenlaan 200A, 3001 Leuven, Belgium}
\email{daniele.taufer@gmail.com}

\subjclass[2020]{Primary: 13H10, 14N07, 15A69}


\begin{abstract}
We study local generalized additive decompositions (GADs) of homogeneous polynomials and their associated points schemes through their local inverse systems.
We verify that their construction and algebraic properties are independent of the chosen apolarity action.
We propose a determinantal method for computing minimal local GADs by minimizing the rank of a symbolic inverse system.
When the locus of minimal supports is finite, this procedure provides a practical tool to determine all minimal local decompositions without tensor extensions.
We prove that this finiteness is guaranteed whenever the local GAD-rank of the form does not exceed its degree.
We analyze both generic and special cases, provide computational evidence assessing the impact of different choices for minors in the determinantal algorithm, and compare our approach with existing algorithms for local apolar schemes.
\end{abstract}

\maketitle

\section{Introduction}
The study of additive decompositions of homogeneous polynomials is a central topic in commutative algebra and applied algebraic geometry, with far-reaching applications to symmetric tensor decomposition.
The classical \emph{Waring decomposition problem}, namely expressing a form as a minimal sum of powers of linear forms, has been extensively investigated in connection with secant varieties, catalecticant matrices, and apolarity theory \cite{BB12,BB14,BK24,IK99,L11}.
From a scheme-theoretic perspective, the solution to the Waring problem for a given form $F$ is equivalent to detecting minimal \emph{reduced} zero-dimensional schemes that are apolar to $F$.
Despite its theoretical interest and multitude of applications, this remains a highly challenging problem: computing such minimal configurations is, in general, difficult without additional structural assumptions \cite{BT20,BCMT10}, even when such minima are unique (\emph{identifiable}) \cite{CM23,COV14}.

Over the past decades, several generalizations of this problem have been considered, capturing more sophisticated intrinsic properties of a given form.
The minimal length of such decompositions, known as their \emph{ranks} \cite{BBM14}, reflects different aspects of the complexity of the considered form and is deeply intertwined with 
the structure of the associated apolar algebras.
In particular, the Castelnuovo-Mumford regularity of such algebras has proven to be a key measure for the computational cost of effectively producing such minimal decompositions \cite{BMT25,BOT24,GKT25,LKS25}.

In this work, we tackle the problem of studying \emph{generalized additive decompositions} \cite{IK99} of a given degree-$d$ form $F$, namely $F = \sum_{i=1}^s \omega_i \ell_i^{d-k_i}$, where $\omega_i$ and $\ell_i$ are forms of degree $k_i$ and $1$, respectively, and $0\leq k_i\leq d$.
When such decompositions are also \emph{local}, i.e., $s=1$, they are shortened as \emph{local GADs} and consist of factorizations of $F$ as
\[ F = \omega \ell^{d-k}. \]
Unlike global GADs, local decompositions are uniquely determined by their support $\ell$, hence the space of local GADs of a given form $F \in \k[x_0,\dots,x_n]$ is parametrized by $\P^n(\k)$.
Our goal is to effectively determine those decompositions having minimal algebraic complexity, measured by the length of the naturally associated apolar points scheme \cite{BJMR18,BOT24}.
More generally, we investigate the stratification of the parameter space $\P^n(\k)$ by the GAD size of a prescribed form, namely, we aim at a complete description of the locus of supports evincing any given GAD size.

Geometrically, a local GAD $F = \omega \ell^{d-k}$ expresses the membership of $[F] \in \P^{{n+d \choose d}-1}$ in the $k$-osculating space to the Veronese variety at $[\ell^d]$ \cite{BMT25,BCGI07}. 
However, minimizing local GADs does \emph{not} correspond to computing the minimal osculation degree $k$ needed to span $F$. 
In fact, the minimal supports $\ell$ might well involve $k=d$ (even when $F$ is reducible), and the locus of such minimal $\ell$'s may have a positive dimension.
This rich geometric behavior reflects the intricate structure of non-reduced points that are \emph{apolar} to a given form $F$, which is governed by the structure of (local) Artinian Gorenstein algebras and their \emph{inverse systems}.
These local objects are essential for understanding the \emph{cactus rank} \cite{BB14}, also known as \emph{minimal scheme length} \cite{IK99}, of forms.
Indeed, this rank can always be computed from GADs of certain primitives (i.e., antiderivatives) of $F$, and in the local case, it coincides with the size of a minimal local GAD if the socle degree of a minimal apolar scheme does not exceed the degree of $F$ \cite{BOT24}.
There exist systematic methods to test and certify \emph{global} minimality \cite{BMT25,BT20}, which have recently also been adapted to address the local decompositions considered in the present manuscript \cite{BF25}.
However, they rely on large and highly structured parameter spaces, making explicit computations rapidly infeasible even for moderately sized examples.
Moreover, with the current approaches, the dimension of the parameter space grows exponentially when addressing non-minimal GADs, which is a major obstruction to classify the GAD size strata of any given form.

\subsection*{Novel contributions}
We connect different notions of apolarity (differentiation, contraction, and $\star$-action) and their corresponding constructions of points schemes evinced by GADs, as they appear in recent literature \cite{BMT25,BJMR18,BOT24}.
In particular, we highlight the explicit correspondence between the inverse systems defined by the respective dual generators (\Cref{prop:equivgenerators}), which is compatible with both the derivation action (in the polynomial-exponential setting) and the contraction action (in the divided powers framework).

We then adopt the latter viewpoint, and propose a new exact method to compute minimal local GADs of a given form in $\k[x_0, \dots,x_n]$, which is based on minimizing the dimension of the corresponding symbolic inverse system (\Cref{algo:localgad}).
This procedure is particularly effective when the set of \emph{minimal supports} is finite, as we collect determinantal relations from the \emph{inverse system matrix} (\Cref{defn:IFell}) of a form $F$ at a symbolic linear form $\ell \in \k[\c_1, \dots,\c_n][x_0, \dots, x_n]$, until they define a zero-dimensional ideal in the parameter space $\k[\c]$.
This allows the computation of minimal local GADs without constructing the whole minor ideals of Hankel matrices, which would be prohibitive even in moderate cases.

A key aspect in this process is the selection of suitable minors.
When the size of a minimal local GAD $F$ does not exceed its degree, we prove that a specific choice of minors implies the existence of a finite number of minimal supports (\cref{prop:FinitelyManyMinimalSupports}), which applies similarly to local cactus (\Cref{cor:localcactus}).
For the general case, we propose and test different practical strategies, and we observe that the best performance is often achieved by \emph{following the contraction chains} (\Cref{sec:RankMinimiz}).

We establish lower and upper bounds for the local GAD-rank (\Cref{sec:boundslocalGADrank}), providing examples that match these extremal values.
We observe that generic forms may not achieve this maximal GAD-rank (\Cref{ex:genericnotmax}), and we apply the proposed determinantal method to understand the structure of \emph{every} local GAD for certain given forms (\Cref{ex:runningexample,ex:InfiniteMinimalLocus,ex:funnymonomial}).
Moreover, these examples show that certain forms admit an infinite locus of supports yielding a given, sub-maximal, local GAD size, which can even agree with the local GAD-rank.
This analysis is much finer than what was achievable with previous methods, highlighting the effectiveness of the proposed approach for studying local GADs.
Moreover, it does not involve any tensor extensions, and its complexity appears to rely only on the local GAD-rank and on the finiteness of the minimal loci, rather than the CM-regularity of the associated schemes.
Finally, we connect the considered inverse system matrix to the classical Hankel and catalecticant matrices associated to a given form (\Cref{sec:comparison}).

\subsection*{Paper organization}
The paper is organized as follows.
In \Cref{sec:Preliminaries}, we review the necessary background on GADs, apolarity, and inverse systems.
In \Cref{sec:minLocalGAD}, we introduce the symbolic inverse system and apply it for understanding and computing minimal local GADs.
In \Cref{sec:comparison}, we compare our method with existing algorithms for local apolar schemes.
Finally, we propose directions for future research in \Cref{sec:conclusion}.


\section{Preliminaries} \label{sec:Preliminaries}

\subsection{Notation}

In this paper, $n \in \N_{>0}$ will be a fixed positive integer, and $\k$ will be a field.
We assume that the characteristic of $\k$ is zero or strictly larger than the degree of any form considered, and we will further assume that all the considered schemes $\Z \subset \P^n(\k)$ have $\k$-rational support.
We will also employ the symbolic ring $\K = \k[\c]$, where $\c = (\c_1, \dots, \c_n)$ are parameters to be specialized in $\k$.

\subsection{Apolarity actions}

Let $\S = \oplus_{d \in \N} \S_d = \k[x_0, \dots, x_n]$ and $\R = \oplus_{d \in \N} \R_d = \k[y_0, \dots, y_n]$ be standard graded multivariate polynomial $\k$-algebras.
The \emph{apolarity} action of $\R$ on $\S$ is defined, in the literature, by either \emph{derivation} or \emph{contraction}, namely by $\k$-linearity as
\[ y^{\b} \circ x^{\a} = \begin{cases}
    \frac{\a!}{(\a-\b)!} x^{\a-\b} & \textnormal{if } \a \geq \b,\\
    0 & \textnormal{otherwise},
\end{cases} \quad \textnormal{(derivation),} \qquad 
y^{\b} \contract x^{\a} = \begin{cases}
    x^{\a-\b} & \textnormal{if } \a \geq \b,\\
    0 & \textnormal{otherwise},
\end{cases} \quad \textnormal{(contraction).}
\]
Let $F_{\rm dp} = \sum_{|\a|=d} \a!F_{\a}x^{\a}$ be the \emph{divided power} representation of $F = \sum_{|\a|=d} F_{\a}x^{\a} \in \S_d$.
We denote by $\S_{\rm dp}$ the divided power algebra of $\S$ \cite[§9]{G96}.
The divided power operator ${\rm dp} : \S \to \S_{\rm dp},\, F \mapsto F_{\rm dp}$ commutes with the $\R$-module structure on $\S$, given by differentiation, and that of $\S_{\rm dp}$, given by contraction, since we have
\begin{equation} \label{eq:circvscontract}
( y^{\b} \circ x^{\a} )_{\rm dp} = y^{\b} \contract ( x^{\a} )_{\rm dp}.
\end{equation}
When $\Char \k = 0$, it is well-known that ${\rm dp}$ is an isomorphism of $\R$-modules \cite[Thm. 9.5]{G96}.

We will also consider linear forms $\ell = x_0 + \xi_1 x_1 + \dots + \xi_n x_n \in \S_1$ corresponding to points $\xi = [1:\xi_1:\dots:\xi_n] \in \P^n(\k)$ in the first affine chart (namely, the points with non-zero first coordinate).
This assumption is not restrictive: up to a generic change of variables in $\PGL_{n+1}(\k)$, we can always assume our points schemes to lie in this chart.
We will identify the coordinate ring of this affine chart by $\S' = \k[x_1,\ldots,x_n]$, and similarly $\R'=\k[y_1,\ldots,y_n]$.
Again, $\S'$ can be given $\R'$-module structures by derivation and contraction, as above.

A related action, which is also referred to as apolar action in the literature, is the $\star$-\emph{action} of $\R$ on $\R^* = \Hom_{\k}(\R,\k)$: for any $g \in \R$ and $\phi \in \R^*$, we define the linear operator
\[ g \star \phi: \R \to \k, \quad p \mapsto \phi(gp) \quad \textnormal{($\star$-action)}. \]

\subsection{GADs and natural apolar schemes} \label{sec:GADsplus}

A \emph{generalized additive decomposition} (shortened as \emph{GAD}) of $G \in \S_d$ is a decomposition as
\begin{equation} \label{eq:GAD}
    G = \sum_{i=1}^s \omega_i \ell_i^{d-k_i},
\end{equation} 
where $0\leq k_i\leq d$, the linear forms $\ell_i\in \S_1$ are non-proportional, i.e., they correspond to distinct points of $\P^n(\k)$, and they do not divide the corresponding $\omega_i \in \S_{k_i}$.
They were first introduced in \cite[Def. 1.30]{IK99}, where they were called \emph{reduced GADs} if $\ell_i \nmid \omega_i$. In this paper, we will make no such distinction: all GADs will be reduced by definition, as in \cite{BOT24}.
We will also always identify projectively equivalent forms, i.e., forms that differ by a nonzero scalar, and consider \emph{finiteness} and \emph{uniqueness} only up to this identification.

Each GAD canonically \emph{evinces} a points scheme $\Z \subset \P^n(\k)$, supported at the points corresponding to the $\ell_i$'s, and whose schematic structure is determined by the corresponding $\omega_i$ \cite{BOT24}.
Such a $\Z$ is also known as the scheme \emph{associated with the GAD} \cite{BMT25}.
For a local GAD $F = \omega \ell^{d-k}$, this scheme is known as the \emph{natural apolar scheme of $F$ at $\ell$} \cite{BJMR18}, and it is explicitly constructed as follows: 
\begin{enumerate}
    \item Consider the de-homogenization $f_{\ell} \in \k[x_1, \dots,x_n]$ of $F_{\rm dp}$ by the linear form $\ell$,
    \item compute the affine ideal $\Ann^{\contract}( f_{\ell} ) = \{ g \in \R' \ : \ g \contract f_{\ell} = 0 \} \subset \R'$, 
    \item homogenize $\Ann^{\contract}( f_{\ell} )$ in $\R$, using $\ell$ as homogenizing variable. 
\end{enumerate}
For a general GAD, one applies the construction above to each summand, and takes the union of the local schemes obtained: if we denote the natural apolar scheme to $\omega \ell^{d-k}$ at $\ell$ by $\Z_{\omega \ell^{d-k}, \ell}$, then the GAD $\sum_{i=1}^s \omega_i \ell_i^{d-k_i}$ evinces the scheme 
\[\Z=\bigcup_{i=1}^s \Z_{\omega_i \ell_i^{d-k_i}, \ell_i}.\]
The homogeneous ideal constructed this way is \emph{apolar} to $F$, namely, it is contained in $\Ann^\circ(F) = \{G \in \R \ : \ G \circ F = 0\}$.
However, differently to $\Ann^\circ(F)$, it is also always zero-dimensional and saturated, i.e., it defines a \emph{locally Gorenstein} points scheme of $\P^n(\k)$ \cite[§6.4]{IK99}.
Conversely, every locally Gorenstein points scheme is evinced by the GAD of some form, but this is not a correspondence as different GADs may evince the same scheme.

A scheme $\Z \subset \P^n(\k)$ of minimal length among all the schemes apolar to $F$ is simply referred to as \emph{minimal}, and its length is known as \emph{cactus rank} \cite{BB14} or \emph{scheme length} \cite{IK99} of $F$.
We will refer to the \emph{size} of a GAD as the length of the scheme that it naturally evinces. 

\begin{remark} 
    Since the natural scheme evinced by a GAD of $F\in \S_d$ is apolar to $F$, the cactus rank of $F$ is lower or equal than the length of that GAD.
    However, this inequality can be strict, as a minimal apolar scheme may not be evinced by a GAD of $F$ (for more details, see \Cref{rmk:localcactusvsGAD}). 
\end{remark}
\begin{example} \label{ex:ex1-A}
    Let $F = x_0^2x_1 + x_0x_1x_2 + x_1^3 \in \S_3$, and consider its local GAD $F = (x_0^2+x_0x_2+x_1^2)x_1$.
    Its divided power representation is $F_{\rm dp} = 2x_0^2x_1 + x_0x_1x_2 + 6x_1^3$, and its de-homogenization by $x_1=1$ (while fixing the other coordinates) is
    \[ f_{x_1} = 2x_0^2 + x_0x_2 + 6 \in \k[x_0,x_2]. \]
    Its annihilator by contraction is the ideal
    \[ \Ann^{\contract}(f_{x_1}) = \ideal{ (y_0-y_2)^2, y_2^2 } \subset \R', \]
    which is already homogeneous, hence it defines the homogeneous ideal $I(\Z) \subset \R$ associated with the considered GAD.
    The corresponding scheme $\Z$ is supported at $[0:1:0]$, which corresponds to $\ell = x_1$, and its Hilbert function is $H_{\Z} = (1,3,4,4,\dots)$.
    There are no apolar schemes to $F$ of length $3$, as $\Ann^{\circ}(F)$ is generated by 3 conics and it is zero dimensional, so if $\Z' \subset \P^2(\k)$ had length 3 with $I(\Z') \subseteq \Ann^{\circ}(F)$, then $H_{\Z'}=(1,3,3,\dots)$ and therefore $I(\Z')$ would contain the 3 conics generating $\Ann^{\circ}(F)$.
    Since these conics define an Artinian algebra, $\Z'$ could not contain any projective point.
    Thus, $\Z$ is a minimal apolar scheme and the cactus rank of $F$ is $4$.
    In particular, there are neither local nor nonlocal GADs of $F$ of size $\leq 3$. 
\end{example}

The punctual schemes associated with GADs can also be defined as $\star$-annihilators of a prescribed linear functional, which is constructed from the given GAD \cite{BMT25}: for every term $\omega \ell^{d-k}$, we uniquely represent $\omega$ in $\k[\ell, x_1, \dots, x_n]$, namely 
\[ \omega = \sum_{j=0}^{k} w_{j} \ell^{k-j}, \quad \textnormal{where} \quad w_{j} \in \S'_{j}, \]
and we define 
\begin{equation} \label{eq:omegaell}
    \omega^{d,\ell} = \sum_{j=0}^{k} (d-j)! w_{j} \in \S'_{\leq k}.
\end{equation}
The formal series associated with a GAD, as in \cref{eq:GAD}, are then defined as
\begin{equation}\label{eq: exponential}
    \underline{\varphi} = \sum_{i=1}^s \omega_i^{d,\ell_i}\, \underline{\mathbf{e}}^{\ell_i} \in \k[[x_0,\dots,x_n]] \quad \textnormal{(homogeneous)}, \qquad \varphi = \sum_{i=1}^s \omega_i^{d,\ell_i}\, \mathbf{e}^{\ell_i} \in \k[[x_1,\dots,x_n]] \quad \textnormal{(affine)},
\end{equation} 
where $\underline{\mathbf{e}}^{\ell} = \sum_{k=0}^{\infty} \frac{\ell^k}{k!}$ is the usual exponential series, while $\mathbf{e}^{\ell}$ is its dehomogenization by $x_0 = 1$.
For the linear forms $\ell = x_0 + \xi_1 x_1 + \dots + \xi_n x_n \in \S_1$ considered in this paper, we simply have $\mathbf{e}^{\ell} = e \cdot \mathbf{e}^{\xi_1 x_1 + \dots + \xi_n x_n}$.
The series $\underline{\varphi}$ (resp. $\varphi$) may be seen as a functional on $\R$ (resp. $\R'$) by identifying $\k[[x]] \simeq \R^*$, where $x^{\a}$ represents the dual element of $\frac{y^{\a}}{\a!}$.
It is easy to check that, under this identification, $\mathbf{e}^{\ell}$ represents (up to the non-zero scalar $e$) the evaluation-in-$\xi$ functional, where $\xi = (\xi_1,\dots,\xi_n) \in \k^n$ is the point associated with $\ell$ \cite[§2.1.1]{M18}.
Then, it follows from \cite{BMT25} that the ideal of the scheme evinced by a GAD as in \cref{eq:GAD} can also be \emph{globally} computed as $\Ann^{\star}(\underline{\varphi}) = ( g \in \R_k \ : \ g \star \underline{\varphi} = 0 )_{k \in \N}$, while its affine projection is $\Ann^{\star}(\varphi) = \{ g \in \R' \ | \ g \star \varphi = 0\}$.

\begin{example} \label{ex:ex1-A-bis}
    Let $F = \omega\, x_1 \in \S_3$, with $\omega=x_0^2+x_0x_2+x_1^2$ as in \Cref{ex:ex1-A}.
    Following the construction discussed above, we write $x_0^2+x_0x_2+x_1^2 = x_1^2 + 0 x_1 + (x_0^2+x_0x_2)$, so we define
    \[ \omega^{3,x_1} = 3! + 1!(x_0^2+x_0x_2) = x_0^2 + x_0x_2 + 6 \in \k[x_0,x_2]. \]
    One can directly verify that
    \[ \Ann^{\star}\big(\omega^{3,x_1} \mathbf{e}^{x_1} \big) = \ideal{ (y_0-y_2)^2,y_2^2 }. \]
    In fact, we have
    \[ \big( (x_0^2 + x_0x_2 + 6) ( pq ) \big) (0,1,0) = 0,  \ \ \forall \, q \in \R \quad \iff \quad p \in \ideal{ (y_0-y_2)^2,y_2^2 }. \]
    For elements such as $p = 3y_0^2-y_1^2 \notin \ideal{ (y_0-y_2)^2,y_2^2 }$, the above expression may vanish for some value of $q \in \R$, e.g., for $q \in \R_{\leq 1}$, but not for all of them: for instance for $q = y_0^2$, we have $\big( (x_0^2 + x_0 x_2 + 6) (pq) \big)(0,1,0) = -2$.
\end{example}

In the following, we verify that the above constructions define indeed the same ideal, and that both $\omega^{d,\ell}$ and $f_{\ell}$ may fairly be considered \emph{local dual generators} for the resulting scheme.
For clarity, we consider a \emph{local GAD} ($s=1$ in \cref{eq:GAD}) with support $\ell = x_0$, and we prove the equality in the first affine chart.
The same argument holds for any choice of $\ell$ and any local chart.

\begin{proposition} \label{prop:equivgenerators}
    Let $\omega \in \S_k$ and $F = \omega x_0^{d-k} \in \S_d$. With the above notation, we have
    \[ (\omega^{d,x_0})_{\rm dp} = f_{x_0} \qquad \textnormal{and} \qquad  \Ann^{\star}( \omega^{d,x_0} \mathbf{e}^{x_0} ) = \Ann^{\contract}( f_{x_0} ). \]
\end{proposition}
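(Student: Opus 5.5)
The plan is to prove the two identities separately. The first is a direct bookkeeping of divided-power coefficients. The second reduces, through the first, to a duality between the $\star$-action on functionals and contraction on polynomials.

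\emph{First identity.} Write $\omega=\sum_{j=0}^k w_j x_0^{k-j}$ with $w_j\in\S'_j$, so that $F=\sum_{j=0}^k w_j x_0^{d-j}$. Take a monomial $x^{\beta}$ of $w_j$, with $\beta$ involving only $x_1,\dots,x_n$ and $|\beta|=j$. It produces the monomial $x_0^{d-j}x^{\beta}$ of $F$, whose divided-power factor is $(d-j)!\,\beta!$. No cancellation can occur, since distinct pairs $(j,\beta)$ give distinct monomials. Hence
\[F_{\rm dp}=\sum_{j}(d-j)!\,x_0^{d-j}(w_j)_{\rm dp},\]
and dehomogenizing at $x_0=1$ gives $f_{x_0}=\sum_j (d-j)!\,(w_j)_{\rm dp}$. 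By \cref{eq:omegaell}, $\omega^{d,x_0}=\sum_j (d-j)!\,w_j$. Since ${\rm dp}$ on $\S'$ acts monomialwise by $x^{\beta}\mapsto\beta!\,x^{\beta}$ and is linear, applying it yields the same expression. This proves the first identity.

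\emph{Second identity.} For $\ell=x_0$ we have $\xi=0$, so $\mathbf{e}^{x_0}=e$ is a nonzero constant. Thus $\varphi=e\,\omega^{d,x_0}$, and the scalar does not affect the annihilator. For $h\in\S'$, under the identification $x^{\alpha}\leftrightarrow (y^{\alpha}/\alpha!)^*$, the functional attached to $h=\sum h_\alpha x^\alpha$ is $y^{\alpha}\mapsto \alpha!\,h_\alpha$. This equals the constant term of $y^{\alpha}\contract h_{\rm dp}$. So the functional is
\[h^*(p)=\bigl(p\contract h_{\rm dp}\bigr)(0),\qquad p\in\R'.\]
Since contraction is an $\R'$-module action, for $g,p\in\R'$ we get
\[(g\star h^*)(p)=h^*(gp)=\bigl(p\contract(g\contract h_{\rm dp})\bigr)(0).\]
Next, a polynomial $G\in\S'$ vanishes if and only if $(p\contract G)(0)=0$ for every $p$; indeed, taking $p=y^{\alpha}$ extracts the coefficient of $x^{\alpha}$. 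Therefore $g\star h^*=0$ if and only if $g\contract h_{\rm dp}=0$. Taking $h=\omega^{d,x_0}$ and applying the first identity, $h_{\rm dp}=f_{x_0}$, which gives $\Ann^{\star}(\omega^{d,x_0}\mathbf{e}^{x_0})=\Ann^{\contract}(f_{x_0})$.

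The main obstacle is getting the normalization of the identification $\k[[x]]\simeq\R^*$ exactly right. One must confirm that the factorials it introduces are precisely those that convert $h$ into $h_{\rm dp}$ under contraction, rather than producing derivation with some mismatched factor. I would check this first on single monomials. The remaining steps are formal.
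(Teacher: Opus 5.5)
Your proof is correct and follows essentially the same route as the paper. The first identity is the same coefficient bookkeeping, and the second reduces to the same criterion, namely that $\bigl(q\contract(p\contract f_{x_0})\bigr)(0)=0$ for all $q\in\R'$ forces $p\contract f_{x_0}=0$. The one difference is in how you reach that criterion: you derive $h^*(p)=(p\contract h_{\rm dp})(0)$ directly from the normalization of $\k[[x]]\simeq(\R')^*$ and then apply the first identity. The paper instead imports the corresponding evaluation formula from \cite[Remark 2.4 and Lemma 2.5]{BMT25}, passing through $F_{\rm dp}$, and first truncates $q$ to degree at most $d-\deg p$; your argument does not need that truncation.
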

\begin{proof}
    The first equality is a straightforward computation: let 
    \[ \omega = \sum_{\a_0=0}^k\sum_{|\a| = k - \a_0} w_{\a} x_0^{\a_0} x^{\a} \in \S_k, \quad \textnormal{so} \quad F = \sum_{\a_0=0}^k\sum_{|\a| = k - \a_0} w_{\a} x_0^{d-k+\a_0} x^{\a}. \]
    By definition (see \cref{eq:omegaell}), we then have
    \[ \omega^{d,x_0} = \sum_{\a_0=0}^k \sum_{|\a| = k - \a_0} (d-|\a|)! w_{\a} x^{\a}, \]
    while
    \[ F_{\rm dp} = \sum_{\a_0=0}^k\sum_{|\a| = k - \a_0} \a!(d-k+\a_0)! w_{\a} x_0^{d-k+\a_0} x^{\a}, \]
    therefore we conclude that
    \[ f_{x_0} = \sum_{\a_0=0}^k\sum_{|\a| = k - \a_0} \a!(d-|\a|)! w_{\a} x^{\a} = (\omega^{d,x_0})_{\rm dp}. \]
    
    For the second equality, we note that $p \in \Ann^{\star}( \omega^{d,x_0} \mathbf{e}^{x_0} )$ means
    \begin{equation} \label{eq:omegax}
        \omega^{d,x_0}(pq)(0, \dots,0) = 0 \quad \forall \, q \in \R'.
    \end{equation} 
    All monomial terms of $q$ with degree $> d-\deg p$ lie inside the ideal $\ideal{y_1, \dots, y_n} \subset \R'$ after differentiation by $\omega^{d,x_0}$, hence \cref{eq:omegax} is unchanged if we require $q \in \R'_{\leq d-\deg p}$.
    However, on terms $pq$ of degree $\leq d$, we have by \cite[Remark 2.4 and Lemma 2.5]{BMT25} that
    \[ \omega^{d,x_0} \mathbf{e}^{x_0} (pq) = (pq) \contract (F_{\rm dp})|_{x_0=1}(0, \dots, 0) = q \contract ( p \contract f_{x_0} )(0, \dots, 0). \]
    The above quantity vanishes for every $q \in R'_{\leq d-\deg p}$ if and only if the monomial coefficients of $p \contract f_{x_0}$ are all $0$, which precisely means $p \in \Ann^{\contract}( f_{x_0} )$.
\end{proof}

\begin{remark} \label{rmk:diminvsys}
    By \cref{eq:circvscontract}, the divided power operator ${\rm dp} : \S \to \S_{\rm dp}$ commutes with the $\R$-module structure on $\S$, and the same holds for $\R', \S'$.
    Thus, \Cref{prop:equivgenerators} shows that for every $F = \omega \ell^{d-k} \in \S_d$ there is a $\R'$-module isomorphism
    \[ \R' \circ \omega^{d,\ell} \simeq \R' \contract f_{\ell}. \]
    Hence, the above $\k$-vector spaces are isomorphic, and the isomorphism respects the polynomial degree.
\end{remark}

\subsection{Local GADs and inverse systems} As defined in \Cref{sec:GADsplus}, the size of a local GAD is the length of the naturally associated apolar scheme. We are particularly interested in \emph{minimal} ones.

\begin{definition}
    Let $F \in \S_d$.
    We define the \emph{local GAD-rank} of $F$ as the minimal size of a local GAD of $F$.
\end{definition}

\begin{remark} \label{rmk:localcactusvsGAD}
    The minimal length of a local scheme apolar to a given $F\in \S_d$, called the \emph{local cactus rank} of $F$ \cite{BJMR18}, may be strictly lower than its local GAD-rank, as minimal schemes do not need to be evinced by a GAD of $F$, even if they are local.
    Indeed, as shown in \cite[Prop. 1]{BJMR18}, if $Z$ is an \emph{irredundant} local apolar scheme, 
    then $Z$ is locally defined by $\Ann^{\contract}(g)$ for some $g \in \S'$ whose degree-$d$ tail equals $f_\ell$.
    Thus, the local cactus rank of $F$ may be evinced by a GAD of a polynomial $G\in \S_{d'}$ with $d'>d$, of which $F$ is a partial (see \cite[Example 4.6]{BOT24} for an explicit example). 
    More generally, the (global) GAD-rank of $F$, that is, the minimal length of a (not necessarily local) scheme evinced by a GAD of $F$, is the minimal length of a zero-dimensional scheme apolar to $F$ and contained in a union of $(d + 1)$-fat points \cite[Prop. 4.3]{BOT24}. 
\end{remark}

We can leverage Macaulay's correspondence \cite{M19} to compute the length of the scheme $\Z$ evinced by a local GAD $F = \omega \ell^{d-k}$ as the $\k$-dimension of $\R' \contract f_{\ell}$ (equiv., $\R' \circ \omega^{d,\ell}$, by \Cref{rmk:diminvsys}).
This is a computationally friendly method, as it involves only linear algebra operations and does not even require computing the defining ideal $I(\Z)$.
We state this precisely in the notation of this paper, although we remark that several formulations of the following result, in different languages, have already appeared in the literature (see, e.g., \cite{BJMR18,CI12,M18}).

\begin{lemma} \label{lemma:schemedim}
    Let $\Z$ be the natural apolar scheme of $F = \omega \ell^{d-k} \in \S_d$ at $\ell \in \S_1$. Then
    \[ \len \Z = \dim_{\k} (\R' \contract f_{\ell}). \]
\end{lemma}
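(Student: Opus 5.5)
The plan is to reduce to the affine chart and invoke Macaulay duality for the local Artinian algebra $\R'/\Ann^{\contract}(f_\ell)$.

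\textbf{Step 1: reduce to a local computation.} Since $\Z$ is supported at the single point $\xi$ associated with $\ell$, which lies in the affine chart $\{\ell\neq 0\}$, the length of $\Z$ is the length of its affine part. By construction, $I(\Z)$ is the homogenization, with respect to $\ell$, of the affine ideal $J=\Ann^{\contract}(f_\ell)\subset\R'$. After a linear change of coordinates sending $\ell$ to $x_0$ (dually, $y_0$ to the homogenizing variable), the affine coordinate ring of $\Z$ in this chart is exactly $\R'/J$. Homogenization followed by dehomogenization gives back $J$, so no spurious components at infinity are introduced, and $I(\Z)$ is saturated. Hence
\[ \len\Z=\dim_\k \R'/J. \]
This dimension is finite because $f_\ell$ is a polynomial: every monomial of degree $>\deg f_\ell$ lies in $J$.

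\textbf{Step 2: Macaulay duality.} Consider the $\k$-linear surjection
\[ \R'\to\R'\contract f_\ell,\qquad g\mapsto g\contract f_\ell. \]
Its kernel is $J$ by definition, so $\R'/J\simeq \R'\contract f_\ell$ as $\R'$-modules, and in particular as $\k$-vector spaces. Combining this with Step 1 gives the claim. One could equally argue with $\Ann^{\star}(\omega^{d,\ell}\mathbf e^{\ell})$ via \Cref{prop:equivgenerators} and \Cref{rmk:diminvsys}, but the contraction picture is the direct one.

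\textbf{Main obstacle.} The only delicate point is the identification in Step 1: one must check that the scheme defined by the $\ell$-homogenization of $J$ has affine coordinate ring precisely $\R'/J$, and that it has no extra length away from $\xi$. Here I would use the standard fact that the $\ell$-homogenization of an ideal is saturated and dehomogenizes back to the original ideal. I would also use that $\R'/J$ is Artinian local, since $J\supseteq (y_1,\dots,y_n)^{\deg f_\ell+1}$. Together these show that $\Z$ is supported only at $\xi$, and its length equals the local length $\dim_\k\R'/J$.
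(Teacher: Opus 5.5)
Your proof is correct in substance, and its second half is the same as the paper's. Both rest on the $\R'$-module isomorphism $\R'/J\simeq \R'\contract f_\ell$, $g\mapsto g\contract f_\ell$, where $J=\Ann^{\contract}(f_\ell)$. The two arguments differ in how they get $\len \Z=\dim_{\k}\R'/J$.

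The paper uses only Hilbert functions. The degree-$t$ homogenization map gives $(\R/I(\Z))_t\simeq \R'_{\le t}/J_{\le t}$ for every $t$. For $t\gg 0$ the left side is the Hilbert polynomial, i.e. $\len \Z$, and the right side stabilizes at $\dim_{\k}\R'/J$. The Hilbert polynomial counts the total length of the scheme wherever it sits, so this never requires locating the support of $\Z$ or excluding anything on $\{\ell=0\}$. As a by-product it expresses the whole Hilbert function of $\Z$ through the filtration $J_{\le t}$.

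Your route identifies $\Z$ scheme-theoretically with $\mathrm{Spec}(\R'/J)$. This is more geometric and shows directly that $\Z$ is a local scheme at $\xi$ with local algebra $\R'/J$. It needs one extra input, however, and that is where your justification is too weak.

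Knowing that the homogenization is saturated and dehomogenizes back to $J$ does not rule out components at infinity. For instance, $(y_2,y_0y_1)=(y_1,y_2)\cap(y_0,y_2)$ is saturated and dehomogenizes at $y_0=1$ to $(y_1,y_2)$. Yet it also contains $[0:1:0]$ and has length $2$, not $1$.

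What you actually need is that $\ell$ is a nonzerodivisor modulo the homogenization $J^h$, i.e. $J^h:\ell=J^h$. To see this, suppose $\ell G\in J^h$ with $G$ homogeneous. Dehomogenizing gives $G|_{\ell=1}\in J$. Since $G$ is a power of $\ell$ times the homogenization of $G|_{\ell=1}$, it follows that $G\in J^h$.

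Consequently no associated prime of $J^h$ contains $\ell$. Every component of $V(J^h)$ is then the closure of its affine part, which is contained in the finite set $V(J)$. Hence $J^h$ is saturated and $V(J^h)$ has no points on $\{\ell=0\}$. Equivalently, $V(J^h)$ is the scheme-theoretic closure of the finite, hence proper, scheme $\mathrm{Spec}(\R'/J)$, which is already closed in $\P^n$. With this replacement your argument is complete.
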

\begin{proof}
    We have a natural $\R'$-module isomorphism
    \[ \R'/\Ann^{\contract}(f_{\ell}) \to \R' \contract f_{\ell}, \quad g \mapsto g \contract f_\ell, \]
    hence the above $\k$-vector spaces have the same dimension.
    Since $I(\Z)$ is obtained by homogenizing $\Ann^{\contract}(f_{\ell})$ by $\ell$, for every $t\geq 0$ the degree-$t$ homogenization map is a $\k$-vector space isomorphism \cite[Chap. 9, Thm. 3.12]{CLOS07}
    \[ \big(\R/I(\Z)\big)_t \simeq \R'_{\leq t}/\Ann^{\contract}(f_{\ell})_{\leq t}. \] 
    For large enough $t \in \N$, the left hand side agrees with $\len \Z$, while the right one yields $\dim_{\k} \R'/\Ann^{\contract}(f_{\ell}) = \dim_{\k} (\R' \contract f_{\ell})$.
\end{proof}

\begin{example}
    In \Cref{ex:ex1-A}, we computed the length-$4$ scheme evinced by the local GAD $F = (x_0^2+x_0x_2+x_1^2)x_1 \in \S_3$, and noted that this is the minimal length of such an apolar scheme.
    Given \Cref{lemma:schemedim}, we can compute the same measure by
    \[ \dim_{\k} (\R' \contract f_{x_1}) = \dim_{\k} \langle 2x_0^2 + x_0x_2 + 6, 2x_0 + x_2, x_0, 1 \rangle = 4. \]
    By \Cref{rmk:diminvsys}, this is also equal to
    \[ \dim_{\k} (\R' \circ \omega^{3,y}) = \dim_{\k} \langle x_0^2 + x_0x_2 + 6, 2x_0 + x_2, x_0, 1 \rangle = 4. \]
\end{example}

\begin{definition}
     Let $F \in \S_d$ and $\ell \in \S_1$. We define the \emph{inverse system of $F$ at $\ell$} as the $\k$-vector space
     \[ \R' \contract f_{\ell} = \{g \contract f_{\ell} \ : \ g \in \R' \}. \]
\end{definition}
We note that the name is consistent with what is usually referred to as the inverse system of an ideal $I \subseteq \R'$ with respect to contraction \cite{CI12}, namely
\[ I^{-1} = \{f \in \S' \ : \ I \contract f = 0 \}. \]
Indeed, we have $\R'\contract f_{\ell} = \big(\Ann^{\contract}(f_{\ell})\big)^{-1}$, since $g\in \big(\Ann^{\contract}(f_{\ell})\big)^{-1}$ if and only if $\Ann^{\contract}(f_{\ell})\subseteq \Ann^{\contract}(g)$, which is equivalent to $g$ being a contraction of $f_{\ell}$, i.e., $g \in \R'\contract f_{\ell}$.


\section{Minimal local GAD} \label{sec:minLocalGAD}

We now focus on the main objective of this paper, namely, computing minimal local GADs $F = \omega \ell^{d-k}$ for a given $F \in \S_d$.
This is equivalent to finding the \emph{minimal supports} $\ell \in \S_1$ such that the natural apolar scheme to $F$ at $\ell$, as defined in \Cref{sec:GADsplus}, has minimal length.
We immediately note that this equivalence holds only in the local setting, as the scheme is all concentrated at one point.
This constitutes a major obstruction for extending the methods developed in this paper to understand non-local objects.

To compute such minimal supports, we symbolically construct $\R' \contract f_{\ell}$, in the parameters of $\ell$, and minimize its dimension.
We work with $\R' \contract f_{\ell}$ instead of $\R' \circ \omega^{d,\ell}$ because the \emph{inverse system matrix} 
will be a \emph{Hankel matrix} (see \Cref{rmk:HankelStructure}).

\subsection{Symbolic inverse system} \label{subsec:symbinvsys}

Let $\K = \k[\c]$ be a symbolic polynomial ring in $\c = (\c_1, \dots, \c_n)$, and consider a symbolic support $\ell = x_0 + \c_1 x_1 + \dots + \c_n x_n \in \K[x]$.
To compute the dual generator $f_{\ell}$, we remove the dependence on the symbols in the linear form where we localize.
We (arbitrarily) consider the base change $x_0 \mapsfrom \ell$ and $x_i \mapsfrom x_i$ for $1 \leq i \leq n$, given by the unipotent upper triangular matrix
\[
    \phi_{\c} = I_{n+1} - (1,0,\dots,0)^T(0,\c_1,\dots,\c_n) = \begin{pmatrix}
1 & -\c_1 & \dots & -\c_n\\
0 & 1     & \dots & 0 \\
\vdots & \vdots & & \vdots \\
0 & 0     & \dots & 1 
\end{pmatrix}.
\]
The inverse systems of $F$ at $\ell$ and of $F\big(\phi_{\c}(x) \big)$ at $x_0 = \phi_{\c}(\ell)$ are isomorphic through $\phi_{\c}$ \cite[Cor. 3.8]{BMT25}.
In particular, they have the same dimension for each specialization of parameters $\c$ in $\k^n$, hence we can equivalently aim to minimize the latter.
To this extent, we define the \emph{symbolic dual generator} as
\begin{equation} \label{eq:Fell}
    f_{\c} = F\big(\phi_{\c}(x) \big)_{\rm dp}(1,x_1, \dots,x_n) \in \K[x_1,\dots,x_n]_{\leq d}.
\end{equation}

\begin{definition} \label{defn:IFell}
    Let $F \in \S_d$, $\ell = x_0 + \c_1 x_1 + \dots + \c_n x_n \in \K[x]$ and $f_{\c}$ be defined as in \cref{eq:Fell}.
    We define the \emph{inverse system matrix} of $F$ at $\ell$ as the square matrix $\mathcal{I}_{F, \ell}(\gamma)$, indexed by $\{y^{\a}\}_{|\a| \leq d}$, given by 
\[
\mathcal{I}_{F,\ell}(\gamma) = \begin{array}{c@{\;}c}
 & y^{\b} \\[-2pt]
y^{\a} &
\left( \begin{array}{ccc}
 & \vdots & \\
\dots & \big(y^{\alpha+\beta} \contract f_{\c}\big)(0,\dots,0) & \dots \\
 & \vdots &
\end{array} \right).
\end{array}
\]
\end{definition}
We label the rows and columns of the inverse system matrix in the degree lexicographic order (with $y_1<\ldots<y_n$), but we remark that the choices of the monomial order or the basis of $\R'_{\leq d}$ do not affect the method we present.

\begin{remark} \label{rmk:HankelStructure}
    The inverse system matrix is a Hankel matrix, as its entries only depend on the product of the row and column indices, i.e., on the sum of the corresponding exponent vectors.
    Indeed, the row indexed by $y^{\a}$ lists the coefficients of $y^{\a} \contract f_{\c}$ in the monomial basis $\{x^{\b}\}_{|\b|\leq d}$ of $\S'_{\leq d}$.
    Moreover, by construction, the symbolic monomial coefficient of $x^{\b}$ in $f_{\c}$ is a polynomial in $\K$, whose degree in each $\c_i$ is not greater than the corresponding $\b_i$.
    In particular, the coefficient of every pure power monomial $x_i^{e_i}$ in $f_{\c}$ is a univariate polynomial in $\k[\c_i]_{\leq e_i}$, and the $(y^{\a},y^{\b})$-entry of $\mathcal{I}_{F,\ell}(\gamma)$ lies in $\k[\c]_{\leq |\a|+|\b|}$.
\end{remark}

\begin{example} \label{ex:ex1-B}
    Let $F=x_0^2x_1 + x_0x_1x_2 + x_1^3 \in \S_3$ be the same form of \cref{ex:ex1-A}, and consider a symbolic linear form $\ell = x_0 + \gamma_1 x_1 + \gamma_2 x_2 \in \K[x_0,x_1,x_2]$.
    We compute $f_{\c}$ directly from \cref{eq:Fell} as
    \[ f_{\c} = (6\gamma_1^2 + 6) x_1^3 + (4\gamma_1\gamma_2 - 2\gamma_1) x_1^2x_2 + (2\gamma_2^2 - 2\gamma_2) x_1x_2^2 - 4\gamma_1 x_1^2 + (-2\gamma_2 + 1) x_1x_2 + 2x_1. \]
    We construct the inverse system matrix $\mathcal{I}_{F,\ell}(\gamma)$ as in \Cref{defn:IFell}, by listing the representation of every monomial contraction of $f_{\c}$ in the monomial basis of $\K[x_1,x_2]$:
\[
\hspace{-0.3cm} \scalebox{0.85}{$ \begin{array}{c@{\;}c}
&
\begin{array}{cccccccccc}
\hspace{1.1cm} 1 \hspace{1.55cm} & y_1\hspace{1.45cm} & y_2\hspace{1.5cm} & y_1^2\hspace{1.25cm} & y_1y_2\hspace{1.2cm} & y_2^2\hspace{0.95cm} & y_1^3\hspace{0.95cm} & y_1^2y_2\hspace{0.95cm} & y_1y_2^2 \hspace{0.35cm} & y_2^3 \hspace{1.15cm}
\end{array}
\\[6pt]
\begin{array}{c}
1 \contract f_{\c}\\
y_1 \contract f_{\c}\\
y_2 \contract f_{\c}\\
y_1^2 \contract f_{\c}\\
y_1y_2 \contract f_{\c}\\
y_2^2 \contract f_{\c}\\
y_1^3 \contract f_{\c}\\
y_1^2y_2 \contract f_{\c}\\
y_1y_2^2 \contract f_{\c}\\
y_2^3 \contract f_{\c}
\end{array}
& \hspace{-0.8cm}
\left(
\begin{array}{cccccccccc}
0 & 2 & 0 & -4\gamma_1 & -2\gamma_2+1 & 0 & 6\gamma_1^2+6 & 4\gamma_1\gamma_2-2\gamma_1 & 2\gamma_2^2-2\gamma_2 & 0 \\
2 & -4\gamma_1 & -2\gamma_2+1 & 6\gamma_1^2+6 & 4\gamma_1\gamma_2-2\gamma_1 & 2\gamma_2^2-2\gamma_2 & 0 & 0 & 0 & 0 \\
0 & -2\gamma_2+1 & 0 & 4\gamma_1\gamma_2-2\gamma_1 & 2\gamma_2^2-2\gamma_2 & 0 & 0 & 0 & 0 & 0 \\
-4\gamma_1 & 6\gamma_1^2+6 & 4\gamma_1\gamma_2-2\gamma_1 & 0 & 0 & 0 & 0 & 0 & 0 & 0 \\
-2\gamma_2+1 & 4\gamma_1\gamma_2-2\gamma_1 & 2\gamma_2^2-2\gamma_2 & 0 & 0 & 0 & 0 & 0 & 0 & 0 \\
0 & 2\gamma_2^2-2\gamma_2 & 0 & 0 & 0 & 0 & 0 & 0 & 0 & 0 \\
6\gamma_1^2+6 & 0 & 0 & 0 & 0 & 0 & 0 & 0 & 0 & 0 \\
4\gamma_1\gamma_2-2\gamma_1 & 0 & 0 & 0 & 0 & 0 & 0 & 0 & 0 & 0 \\
2\gamma_2^2-2\gamma_2 & 0 & 0 & 0 & 0 & 0 & 0 & 0 & 0 & 0 \\
0 & 0 & 0 & 0 & 0 & 0 & 0 & 0 & 0 & 0
\end{array}
\right).
\end{array} $}
\]
\end{example}

We now turn the observations of \Cref{rmk:HankelStructure} into an effective criterion to ensure a finite number of minimal supports.

\begin{lemma} \label{lem:degfL}
    The size of the local GAD $F = \omega\ell^{d-k} \in \S_d$ is at least $1+\deg f_{\ell}$.
\end{lemma}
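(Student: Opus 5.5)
By \Cref{lemma:schemedim}, the size of the local GAD $F = \omega\ell^{d-k}$ equals $\dim_{\k}(\R' \contract f_{\ell})$. The plan is therefore to exhibit $1+\deg f_\ell$ linearly independent elements of the inverse system $\R'\contract f_\ell$.

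Set $e = \deg f_\ell$. We may assume $f_\ell \neq 0$; otherwise the inverse system is $\{0\}$, a degenerate case where $F=0$ that we exclude. Choose a monomial $x^{\a}$ with $|\a| = e$ appearing in $f_\ell$ with nonzero coefficient $c$. Write $\a$ as a chain of exponent vectors
\[ 0 = \b_0 < \b_1 < \dots < \b_e = \a, \]
where each $\b_{j+1}-\b_j$ is a standard basis vector; this is possible by decreasing one coordinate at a time. For $0\le j\le e$, consider
\[ g_j = y^{\a-\b_j} \contract f_\ell \in \R'\contract f_\ell. \]

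We now bound the degree of each $g_j$. Contraction by a monomial of degree $e-j$ lowers degree by exactly $e-j$ on every term it does not kill. Hence $\deg g_j \le j$. Moreover, the coefficient of $x^{\b_j}$ in $g_j$ equals the coefficient of $x^{\a}$ in $f_\ell$, namely $c\neq 0$. This is immediate from the definition of contraction: $y^{\a-\b_j}\contract x^{\a} = x^{\b_j}$, and no other monomial of $f_\ell$ contributes to $x^{\b_j}$. Thus $\deg g_j = j$ exactly.

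Polynomials $g_0,\dots,g_e$ with pairwise distinct degrees $0,1,\dots,e$ are linearly independent: in any nontrivial vanishing linear combination, look at the top-degree term of the $g_j$ of largest degree with nonzero coefficient. That term cannot cancel, a contradiction. Hence $\dim_\k(\R'\contract f_\ell)\ge e+1$, which proves the lemma.

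There is no real obstacle here. The only points needing care are the nonvanishing of the contracted coefficient, which holds because contraction carries no factorial factors (and in any case the characteristic hypothesis would handle derivation), and the convention for $f_\ell = 0$.
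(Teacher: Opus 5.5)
Your proof is correct and follows essentially the same route as the paper. It contracts a top-degree monomial $x^{\alpha}$ of $f_\ell$ along a chain of divisors of $y^{\alpha}$, obtaining $1+\deg f_\ell$ elements of $\R'\contract f_\ell$ of pairwise distinct degrees, hence linearly independent, and then concludes with \Cref{lemma:schemedim}. You also check explicitly that the coefficient of $x^{\beta_j}$ in $g_j$ equals the nonzero coefficient of $x^{\alpha}$ in $f_\ell$, so that $\deg g_j = j$ exactly; the paper leaves this detail implicit.
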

\begin{proof}
    Let $x^{\a}$ be a monomial of maximal degree in $f_{\ell}$ and consider, for $1 \leq N \leq |\a| = \deg f_{\ell}$, a chain of contractions
    \[ Y_0 = 1 \in \R', \quad Y_{N} = Y_{N-1}\,y_i, \ \textnormal{ where } y_i \in \R' \textnormal{ is chosen such that } \ Y_{N-1}\,y_i \ | \ y^{\a}. \]
    Since $\deg (Y_i \contract f_{\ell}) = |\a|-i$, the contractions $\{Y_i \contract f_{\ell} \}_{0 \leq i \leq |\a|}$ are $\k$-linearly independent elements of $\R' \contract f_{\ell}$, so by \Cref{lemma:schemedim} the local GAD-rank of $F$ at $\ell$ is at least $|\a|+1$.
\end{proof}

\begin{proposition} \label{prop:FinitelyManyMinimalSupports}
    Let $F \in \S_d$ be a form of local GAD-rank at most $d$.
    Then $F$ has at most $d^n$ minimal supports.
\end{proposition}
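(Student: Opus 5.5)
The plan is to show that, under the hypothesis, every minimal support $\ell$ must be a linear factor of $F$. Since $F$ has at most $d$ pairwise non-proportional linear factors, this yields at most $d \leq d^n$ minimal supports.

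\emph{Step 1: $\deg f_\ell = k$.} Fix a minimal support $\ell$ and write the local GAD at $\ell$ as $F = \omega \ell^{d-k}$ with $\ell \nmid \omega$. Using the projective change of coordinates $\phi_{\c}$ (specialized at the point of $\ell$), we may take $\ell = x_0$, as in \Cref{prop:equivgenerators}. There $f_{x_0} = (\omega^{d,x_0})_{\rm dp}$. Its homogeneous component of degree $j$ is, up to the nonzero factorials, the divided-power image of $w_j$, where $\omega = \sum_j w_j x_0^{k-j}$ and $w_j \in \S'_j$. The condition $x_0 \nmid \omega$ means exactly $w_k \neq 0$. Our characteristic assumption makes the factorials $(d-k)!$ and $\a!$ invertible. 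Hence $\deg f_{x_0} = k$, and by the isomorphism of inverse systems under $\phi_{\c}$ the same holds for $f_\ell$.

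\emph{Step 2: $k < d$.} Suppose the local GAD-rank $r$ of $F$ is at most $d$, and let $\ell$ be a minimal support. By \Cref{lem:degfL}, $r \geq 1 + \deg f_\ell = 1 + k$, so $k \leq r - 1 \leq d - 1$. Then $d - k \geq 1$, and $\ell$ divides $F$.

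\emph{Step 3: counting.} The polynomial ring $\S$ is a UFD, so a nonzero $F$ of degree $d$ has at most $d$ pairwise non-proportional linear factors. Supports are counted up to scalars, so there are at most $d \leq d^n$ minimal supports.

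The only delicate step is Step 1: checking carefully that the degree of the dual generator equals $k$, i.e.\ that passing to divided powers and dehomogenizing at $\ell$ neither kills the top component nor depends on the chosen chart.

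An alternative, closer in spirit to the symbolic setup, would produce the bound $d^n$ literally, but needs more care. By \Cref{rmk:HankelStructure}, the coefficient of $x_i^d$ in $f_{\c}$ is a univariate polynomial in $\k[\c_i]_{\leq d}$, and this coefficient must vanish at every minimal support. After a generic change of coordinates it is not identically zero, since it is essentially $F$ restricted to the line through $e_0$ and $e_i$. This confines the minimal supports to a grid of at most $d^n$ points. I would present the factor argument as the main proof and mention this one only as a remark, since it requires the genericity of coordinates and the affine-chart assumption.
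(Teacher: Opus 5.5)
Your proof is correct, and it takes a genuinely different and sharper route than the paper's. The paper passes to a generic chart in which the coefficient of $x_0^d$ in $F$ is nonzero. It then uses \Cref{lem:degfL} exactly as you do, to force the degree-$d$ part of $f_\ell$ to vanish at a minimal support. After that it keeps only the $n$ coefficients of $x_i^d$ in $f_{\c}$, each a univariate polynomial of degree exactly $d$ in $\c_i$. This confines the minimal supports to a grid of at most $d^n$ points, which is essentially your ``alternative''.

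You instead exploit the whole degree-$d$ part. It is the divided-power rescaling of $F$ restricted to the hyperplane $\ell=0$, so its vanishing is equivalent to $\ell\mid F$. Your Steps 1--2 package this as ``$\ell\nmid F$ forces $k=d$, hence $\deg f_\ell=d$ and size $\geq d+1$''. This gains several things:
\begin{enumerate}
\item It gives at most $d$ minimal supports, indeed at most the number of distinct linear factors of $F$.
\item It needs no generic change of coordinates and no chart bookkeeping.
\item It makes precise the paper's later remark that $d^n$ is typically far from sharp.
\item It improves \Cref{cor:localcactus} in the same way.
\end{enumerate}
What the paper's version buys is algorithmic: it exhibits $n$ explicit entries of $\mathcal{I}_{F,\ell}(\c)$, univariate in distinct variables, that already cut out a zero-dimensional locus. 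That is the kind of cheap minor selection \Cref{algo:localgad} relies on.

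One simplification for your Step 1: you do not need to appeal to an isomorphism of inverse systems. By \cref{eq:Fell}, $f_\ell$ is by definition $f_{x_0}$ of $F\circ\phi_{\c}=(\omega\circ\phi_{\c})\,x_0^{d-k}$. Moreover $x_0\nmid\omega\circ\phi_{\c}$, because $\phi_{\c}$ is a linear automorphism with $\ell\circ\phi_{\c}=x_0$.
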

\begin{proof}
    After a generic change of coordinates, we can assume that $F = \sum_{\a_0 = 0}^d \sum_{|\a|=d-\a_0} F_{\a} x_0^{\a_0} x^{\a}$, with $F_{0} \neq 0$, i.e., the coefficient of $x_0^{d}$ in $F$ is non-zero.
    Thus, we have
    \begin{equation} \label{eq:fgammaexps}
        f_{\c} = \sum_{|\a| \leq d} F_{\a} (d-|\a|)! \a! (1+\c_1x_1+\dots+ \c_nx_n)^{d-|\a|} x^{\a}.
    \end{equation}
    Let $\ell \in \S_1$ be a minimal support of $F$.
    If $f_{\ell}$ had degree $d$, then $F$ would have GAD-rank $> d$ by \Cref{lem:degfL}. 
    Therefore, we conclude that all the coefficients of degree-$d$ monomials of $f_{\ell}$ must vanish for such a minimal $\ell$.
    By inspecting \cref{eq:fgammaexps}, we see that the monomial coefficient of $x_i^d$ is of the form $d!F_{0}\c_i^d + \k[\c_i]_{\leq d-1}$, namely, a degree-$d$ univariate polynomial in $\c_i$.
    Hence, there are at most $d$ choices for each $\c_i$, so at most $d^n$ choices for such a minimal $\ell$.
    Since the above argument was performed on a generic chart of $\P^n(\k)$, the minimal supports are all computed as above.
\end{proof}

\begin{remark}
    The proof of \Cref{prop:FinitelyManyMinimalSupports} shows that the coefficients of degree-$d$ pure powers in $f_{\c}$ define a zero-dimensional ideal in $\k[\c]$.
    However, it also shows that \emph{every} coefficient of degree-$d$ terms in $f_{\c}$ must vanish for a minimal support, if the corresponding rank does not exceed $d$.
    Thus, except for small cases (e.g., $F = x_0x_1 \in \S_2$ has $2$ minimal supports), the bound of \Cref{prop:FinitelyManyMinimalSupports} is not expected to be sharp: the additional vanishing conditions typically imply that the number of minimal supports is much lower than $d^n$.
\end{remark}

\begin{corollary} \label{cor:localcactus}
    Let $F \in \S_d$ be a form of local cactus rank at most $d$.
    Then $F$ has at most $d^n$ minimal local apolar schemes.
\end{corollary}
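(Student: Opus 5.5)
The plan is to reduce \Cref{cor:localcactus} to \Cref{prop:FinitelyManyMinimalSupports} by relating minimal local apolar schemes to local GADs of $F$ itself. Two facts are needed: first, that every minimal local apolar scheme of length at most $d$ is evinced by a local GAD of $F$ at its support; second, that such a scheme is determined by its support.

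First I would show that if a local scheme $\Z$ supported at $\ell$ is apolar to $F$ and $\len \Z \le d$, then its socle degree does not exceed $d$. By \Cref{rmk:localcactusvsGAD} and \cite[Prop. 1]{BJMR18}, a minimal (hence irredundant) local apolar scheme is locally $\Ann^{\contract}(g)$ for some $g\in\S'$ whose degree-$d$ tail is $f_\ell$. The length of $\Z$ is $\dim_\k(\R'\contract g)$. The chain-of-contractions argument from the proof of \Cref{lem:degfL} applies verbatim to $g$, giving $\len\Z \ge 1+\deg g$. Hence $\deg g \le \len\Z - 1 \le d-1 < d$. Since $g$ has degree less than $d$, its degree-$d$ tail is $g$ itself, so $g = f_\ell$. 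Therefore $\Z = \Z_{F,\ell}$ is exactly the natural apolar scheme of $F$ at $\ell$. This is the step I expect to need the most care: one must make sure the "tail" statement in \cite{BJMR18} is read correctly, namely that $g - f_\ell$ consists only of terms of degree $>d$. With that reading, $\deg g<d$ forces equality.

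Next I would deduce that the local cactus rank equals the local GAD-rank whenever either is at most $d$. Every natural apolar scheme is a local apolar scheme, so the local cactus rank is at most the local GAD-rank. Conversely, a minimal local apolar scheme of length at most $d$ is, by the previous step, the natural scheme of a local GAD of $F$. So the local GAD-rank is at most $d$, and the minimal local apolar schemes are precisely the natural schemes $\Z_{F,\ell}$ at the minimal supports $\ell$.

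Finally, $\Z_{F,\ell}$ is supported at the single point $\ell$, so distinct minimal schemes have distinct supports. The map from minimal local apolar schemes to minimal supports is therefore injective. Applying \Cref{prop:FinitelyManyMinimalSupports} bounds the number of minimal supports by $d^n$, which gives at most $d^n$ minimal local apolar schemes.
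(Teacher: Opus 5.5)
Your proof is correct and follows the paper's route. You bound the socle degree (your $\deg g$) by $\len\Z-1\le d-1$, conclude that every minimal local apolar scheme is the natural apolar scheme of a local GAD of $F$ at its support, and then apply \Cref{prop:FinitelyManyMinimalSupports}. The only difference is that you derive the two cited facts directly, using \Cref{lem:degfL} and \cite[Prop. 1]{BJMR18} in place of \cite[Prop. 2.11-(iv)]{CI12} and \cite[Prop. 4.3]{BOT24}, and you make explicit the injectivity from minimal schemes to minimal supports, which the paper leaves implicit.
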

\begin{proof}
    Let $\Z \subset \P^n(\k)$ be a minimal local apolar scheme to $F$.
    By \cite[Prop. 2.11-(iv)]{CI12}, the socle degree $\mathfrak{a}(\Z)$ of the Artinian algebra defining $\Z$ satisfies $\mathfrak{a}(\Z) + 1 \leq \len \Z \leq d$, hence by \cite[Prop. 4.3]{BOT24} the scheme $\Z$ is evinced by a (minimal) local GAD of $F$.
    Therefore, the local GAD-rank of $F$ agrees with its local cactus rank, and the minimal local apolar schemes to $F$ arise from minimal local GADs, which are at most $d^n$ by \Cref{prop:FinitelyManyMinimalSupports}.
\end{proof}

\subsection{Minimal local GAD algorithm}

As observed in \Cref{subsec:symbinvsys}, a minimal local GAD of a form $F\in \S_d$ is determined by a linear form $\ell\in \S_1$ minimizing the dimension of the inverse system $\R' \contract f_{\ell}$.
Thus, the coefficients of such $\ell$ minimize the rank of the inverse system matrix $\mathcal{I}_{F, \ell}(\gamma)$.

\vspace{-0.1cm}
\noindent\begin{minipage}{\linewidth}
\begin{algorithm}[H]
\centering
\caption{\texttt{Minimal local GAD}\label{algo:localgad}}
\begin{algorithmic}[1]
\Function{MinimalSupports}{$F$}
    \State \textbf{Input}: $F \in \S_d$.
    \State Compute $f_{\c}$ for a symbolic linear form $\ell = x_0 + \c_1 x_1 + \dots + \c_nx_n$.
    \State Construct the inverse system matrix $\mathcal{I}_{F,\ell}(\gamma)$.
    \State\label{algo-hard}Compute the values of $\c$ minimizing the rank of $\mathcal{I}_{F,\ell}(\gamma)$.
    \State \textbf{Output}: The coefficients $\c$ of all minimal supports of $F$ in the first affine chart.
\EndFunction
\end{algorithmic}
\end{algorithm}
\end{minipage}

\vspace{0.1cm}
\begin{remark} \label{rmk:difficultPart}
    The computational bottleneck of \Cref{algo:localgad} is Line \ref{algo-hard}. 
    Theoretically, we could consider the ideal generated by all the $r \times r$ minors of $\mathcal{I}_{F,\ell}(\gamma)$, for increasing values of $r \in \N$, until it defines a non-empty variety.
    In practice, when the number of minimal local GADs is finite, one can simply collect enough minors to define a zero-dimensional variety, and test the inverse system dimensions associated with all the resulting $\ell$'s.
    We can also exploit the Hankel structure to avoid redundant minors.  
    A more detailed discussion of this method is given in \Cref{sec:RankMinimiz}.
\end{remark}

\begin{example} \label{ex:ex1-C}
    We consider $F = x_0^2x_1 + x_0x_1x_2 + x_1^3 \in \S_3$ as in the previous examples.
    Since its cactus rank is $4$, as observed in \cref{ex:ex1-A}, there will always be a non-vanishing $4 \times 4$ minor of $\mathcal{I}_{F,\ell}(\gamma)$, regardless of the choice of the parameters $\{\gamma_1,\gamma_2\}$.
    This can also be checked computationally, by verifying that the ideal defined by the $4 \times 4$ minors of $\mathcal{I}_{F,\ell}(\gamma)$ is the unit ideal. 
    To search for minimal supports of length-$4$ local GADs, we consider the following $5\times5$ submatrix of $\mathcal{I}_{F,\ell}(\gamma)$:
\[
\begin{array}{c@{\;}c}
&
\begin{array}{ccccc}
\hspace{0.5cm} y_1 \hspace{1.25cm} & y_2 \hspace{1.15cm} & y_1y_2 \hspace{1.15cm} & y_2^2 \hspace{0.95cm} & y_1y_2^2 \hspace{0.35cm}
\end{array}
\\[6pt]
\begin{array}{c}
1\contract f_{\c}\\
y_1\contract f_{\c}\\
y_2\contract f_{\c}\\
y_1y_2\contract f_{\c}\\
y_2^2\contract f_{\c}
\end{array}
&
\left(
\begin{array}{ccccc}
2 & 0 & -2\gamma_2+1 & 0 & 2\gamma_2^2-2\gamma_2 \\
-4\gamma_1 & -2\gamma_2+1 & 4\gamma_1\gamma_2-2\gamma_1 & 2\gamma_2^2-2\gamma_2 & 0 \\
-2\gamma_2+1 & 0 & 2\gamma_2^2-2\gamma_2 & 0 & 0 \\
4\gamma_1\gamma_2-2\gamma_1 & 2\gamma_2^2-2\gamma_2 & 0 & 0 & 0 \\
2\gamma_2^2-2\gamma_2 & 0 & 0 & 0 & 0
\end{array}
\right).
\end{array}
\]
Its determinant $2^5\gamma_2^5(\gamma_2-1)^5$ vanishes if and only if $\gamma_2 \in \{0,1\}$.
We now consider another $5 \times 5$ submatrix:
\[
\begin{array}{c@{\;}c}
&
\begin{array}{ccccc}
\hspace{0.25cm} y_1 \hspace{1.45cm} & y_2 \hspace{1.45cm} & y_1^2 \hspace{1.3cm} & y_1y_2 \hspace{1.15cm} & y_1^2y_2 \hspace{0.2cm}
\end{array}
\\[6pt]
\begin{array}{c}
1\contract f_{\c}\\
y_1\contract f_{\c}\\
y_2\contract f_{\c}\\
y_1^2\contract f_{\c}\\
y_1y_2\contract f_{\c}
\end{array}
&
\left(
\begin{array}{ccccc}
2 & 0 & -4\gamma_1 & -2\gamma_2+1 & 4\gamma_1\gamma_2-2\gamma_1 \\
-4\gamma_1 & -2\gamma_2+1 & 6\gamma_1^2+6 & 4\gamma_1\gamma_2-2\gamma_1 & 0 \\
-2\gamma_2+1 & 0 & 4\gamma_1\gamma_2-2\gamma_1 & 2\gamma_2^2-2\gamma_2 & 0 \\
6\gamma_1^2+6 & 4\gamma_1\gamma_2-2\gamma_1 & 0 & 0 & 0 \\
4\gamma_1\gamma_2-2\gamma_1 & 2\gamma_2^2-2\gamma_2 & 0 & 0 & 0
\end{array}
\right).
\end{array}
\]
When $\gamma_2 \in \{0,1\}$, the determinant of this second submatrix is $\pm 32\gamma_1^5$, which vanishes precisely when $\gamma_1=0$.
Thus, we have only two possible candidates to annihilate all $5 \times 5$ minors, namely $(\gamma_1,\gamma_2) = (0,0)$ and $(\gamma_1,\gamma_2) = (0,1)$.
One can easily check that both choices lead to rank-$4$ numeric matrices, hence we get two minimal local GADs
\[ F = F\, x_0^0 \quad \textnormal{and} \quad F = F\, (x_0+x_2)^0, \]
whose natural apolar schemes are defined, respectively, by
\[ \ideal{-6y_0y_2 + y_1^2, y_2^2 } \quad \textnormal{and} \quad \ideal{ -6y_0(y_0-y_2) + y_1^2, (y_0-y_2)^2 }. \]
Their associated Hilbert function is $(1,3,4,4,\dots)$, equal to that of the minimal scheme computed in \cref{ex:ex1-A}.
\end{example}

\begin{remark}
    In \cref{ex:ex1-C} we did \emph{not} find the minimal local GAD $F = (x_0^2+x_0x_2+x_1^2)x_1$ that we considered in \cref{ex:ex1-A}.
    This was expected, since the support $[0:1:0]$ of the latter GAD lives in the projective hyperplane $x_0 = 0$.
    By running \Cref{algo:localgad} after a generic change of coordinates, we verify that these three local GADs are the unique minimal ones.
    We also note that the minimal apolar schemes of \cref{ex:ex1-C} are $\PGL_3(\k)$-equivalent, but their $\PGL_3(\k)$-orbit does not contain the scheme of \cref{ex:ex1-A}, whose degree-$2$ graded component is made of forms in two \emph{essential variables} \cite{C05}.
\end{remark}

\begin{remark} \label{rmk:norank5}
    Computing the ideal generated by $6\times 6$ minors of $\mathcal{I}_{F,\ell}(\gamma)$ in \Cref{ex:ex1-C}, we find again the same three solutions that were defined by the $5\times 5$ minors.
    This holds even after a random linear change of coordinates on $F$. 
    Hence, we conclude that there are no local GADs of $F$ of size $5$.
    We note that this holds only locally, as the GAD $F = x_0(x_0x_1+x_1x_2) + x_1^3$ evinces the scheme $\Z$ defined by
    \[ I(\Z) = \ideal{ y_0y_1^2, y_1^2y_2, y_2^2 } = \ideal{y_1^2,y_2^2} \cap \ideal{y_0,y_2}, \]
    whose Hilbert function is $H_{\Z} = (1,3,5,5,\dots)$.
    We also note that we cannot have such \emph{length jumps} with unconstrained apolar schemes, since for every length-$r$ scheme apolar to $F$, we can obtain infinitely many length-$(r+j)$ apolar schemes to $F$, by simply adding $j$ new reduced points.
\end{remark}

\subsection{Bounds on the local GAD-rank} \label{sec:boundslocalGADrank}
In this section, we discuss the constraints on the local GAD-rank.

By \Cref{lem:degfL}, the local GAD-rank of a form $F \in \S_d$ is bounded from below by $d-k+1$, where $k$ is the maximal power of a linear form dividing $F$.
Another lower bound is given by the number of essential variables of $F$, that is, the minimum number of variables required to write the polynomial, up to $\GL$-action \cite{C05}.
This number equals the dimension of the space spanned by the linear contractions of $F$ (equivalently, the rank of the first \emph{catalecticant matrix} of $F$, see \Cref{subsec:catalecticant}), and it is a lower bound for the length of any apolar scheme to $F$.
Indeed, if $F$ has $n+1$ essential variables and $\Z \subset \mathbb P^n(\k)$ is an apolar scheme to $F$ of length $r<n+1$, then $I(\Z)_1$ contains at least one linear form, so $I(\Z) \subseteq \Ann^{\circ}(F)$ implies that some variable of $F$ was not essential.
This lower bound can be attained, for instance when $F = g \ell^{d-1}$ for $g,\ell \in \S_1$.
In such a case, the number of essential variables and the local GAD-rank are both equal to $\dim_{\k}\langle g,\ell\rangle$.
Both of the previous quantities are lower than the local cactus rank of $F$, which is therefore a theoretically better lower bound for the local GAD-rank of $F$ (cf. \Cref{rmk:localcactusvsGAD}).
However, we currently have no general methods to efficiently compute this rank.

The next proposition, based on the ideas in {\cite[Thm. 3]{BR13}}, provides an upper bound on the length of local GADs.
\begin{proposition} \label{prop:genericrank}
    Let $F \in \S_d$ and let $\Z$ be a scheme evinced by a local GAD of $F$. Then
    \[ \len \Z \leq \begin{cases} \frac{n+2k}{k}\binom{ n+k-1 }{n} & \textnormal{if } d = 2k, \\
    2\binom{ n+k }{n} & \textnormal{if } d = 2k+1.
    \end{cases}
    \]
\end{proposition}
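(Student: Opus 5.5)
By \Cref{lemma:schemedim}, $\len \Z = \dim_{\k}(\R' \contract f_{\ell})$, where $f_\ell \in \S'_{\leq d}$ is the local dual generator. Up to the change of coordinates $\phi_\c$ of \Cref{subsec:symbinvsys}, we may assume $\ell = x_0$. The plan is to bound this dimension by splitting $\R'$ by degree around the middle, as in the catalecticant (Hankel) bound of \cite[Thm. 3]{BR13}.

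For every $t$, the space $\R'\contract f_\ell$ is spanned by two families. The first is $\{g \contract f_\ell : \deg g \leq t\}$, the image of $\R'_{\leq t}$, so its dimension is at most $\binom{n+t}{n}$. The second is $\{g\contract f_\ell : \deg g > t\}$. Since $\deg f_\ell \leq d$, every element of the second family has degree $\leq d-t-1$, so it lies in $\S'_{\leq d-t-1}$, of dimension $\binom{n+d-t-1}{n}$. This gives
\[ \len \Z \leq \binom{n+t}{n} + \binom{n+d-t-1}{n}. \]

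For $d=2k+1$, take $t=k$. Then both terms equal $\binom{n+k}{n}$, which gives $2\binom{n+k}{n}$ directly.

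For $d=2k$, the two choices $t=k$ and $t=k-1$ both give $\binom{n+k}{n}+\binom{n+k-1}{n}$. The claimed bound is
\[ \frac{n+2k}{k}\binom{n+k-1}{n} = \binom{n+k-1}{n} + \frac{n+k}{k}\binom{n+k-1}{n}. \]
Using $\frac{n+k}{k}\binom{n+k-1}{n}=\binom{n+k}{n}$, this equals $\binom{n+k}{n}+\binom{n+k-1}{n}$, which matches.

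The only delicate point is making sure the two families really cover the whole space. This holds because $\R'\contract f_\ell$ is spanned by monomial contractions $y^\a \contract f_\ell$, and each monomial has degree either $\leq t$ or $> t$, so it falls in one of the two families. The degree drop in the second family holds because contraction by $y^\a$ lowers degree by $|\a|$, and contractions of $f_\ell$ stay in the polynomial ring. I expect no real obstacle beyond checking the binomial identity.
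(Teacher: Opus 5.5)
Your argument is correct and is essentially the paper's proof. The paper also splits $\R' \contract f_\ell$ at contraction degree $\lfloor d/2 \rfloor$: it bounds the low-degree contractions by $\dim_{\k}\R'_{\leq \lfloor d/2 \rfloor}$ and the high-degree ones by the space of polynomials of degree $< d-\lfloor d/2\rfloor$, following \cite[Thm. 3]{BR13}, and then concludes with \Cref{lemma:schemedim}. Your version states the paper's ``maximal when'' phrasing as an explicit subadditivity inequality and verifies the binomial identity, which the paper leaves implicit.
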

\begin{proof}
    For any $\ell \in \S_1$, $f_\ell$ is a polynomial of degree at most $d$ in $n$ variables.
    The dimension of its contraction space $\R' \contract f_\ell$ is maximal when
    $\{ y^{\b} \contract f_\ell \}_{|\b| \leq \lfloor \frac{d}{2} \rfloor}$ are all linearly independent, while $\langle y^{\b} \contract f_\ell \rangle_{|\b| = t} = \S_{d-t}'$ for $\lfloor \frac{d}{2} \rfloor < t \leq d$.
    This maximal dimension equals precisely the formula above, as noticed in the proof of \cite[Thm. 3]{BR13}.
    By \Cref{lemma:schemedim}, this number bounds $\len \Z = \dim_{\k}(\R' \contract f_\ell)$ from above.
\end{proof}

\begin{remark}
The formula in \Cref{prop:genericrank} corresponds to the length of the apolar algebra of a generic non-homogeneous polynomial (see \cite[Thm. 3.31]{IE78} and \cite[Thm. 1D]{Iar84}).
Since the de-homogenization by $\ell$ induces an isomorphism of $\k$-vector spaces between $\S_d$ and $\S'_{\leq d}$, this is the expected length of a local GAD $F\ell^0$, for generic choices of $F \in \S_d$ and $\ell \in \S_1$.
However, the local GAD-rank of a generic $F \in \S_d$ does not need to match this upper bound, as shown by the following example.
In fact, generic forms may admit non-generic de-homogenizations, for special choices of $\ell \in \S_1$.
\end{remark}

We thank the anonymous referee for pointing out the following example.

\begin{example} \label{ex:genericnotmax}
     Plane elliptic curves form an open dense subset of $\k[x_0,x_1,x_2]_3$.
     Let $F \in \S_3$ be one such a generic element, and consider the elliptic curve $V(F_{\rm dp}) \subset \P^2(\k)$.
     For a generic $\ell \in \S_1$, $f_\ell$ has 6 linearly independent contractions, which agrees with the bound given by \Cref{prop:genericrank}.
     On the other hand, let $\ell \in \S_1$ be the equation of an inflectional tangent of the curve, that is, a tangent to $V(F_{\rm dp})$ at an inflection point.
     We note that there are precisely $9$ distinct choices for such lines (possibly in some finite extension of $\k$), since the inflection points arise from the intersection of the elliptic curve with its \emph{Hessian} (see, e.g., \cite{MPT25}). 
     We consider the coordinate change sending the inflectional line at infinity ($\ell = x_2$), and $F_{\rm dp} = - x_1^2x_2 + x_0^3 + a x_0 x_2^2 + b x_2^3$ in its short Weierstrass form, with $a,b \in \k$.
     Thus, we have $f_{\ell} = -x_1^2 + x_0^3 + a x_0 + b$ and
     \[ \dim_{\k} ( \R' \contract f_\ell ) = \dim_{\k} \langle f_\ell, x_0 \contract f_\ell, x_0^2 \contract f_\ell, x_0^3 \contract f_\ell, x_1 \contract f_\ell \rangle = 5. \]
     We conclude that a generic element of $\k[x_0,x_1,x_2]_3$ has local GAD-rank $5$, and precisely $9$ minimal supports.
\end{example}


\begin{remark}
    The upper bound given in \Cref{prop:genericrank} can be employed to describe the size of \emph{all} possible GADs of a form (see, e.g., \Cref{ex:runningexample}).
    We note that one can find positive-dimensional loci of supports determining a given GAD size even before reaching the maximal one given by \Cref{prop:genericrank}.
    These loci may (\Cref{ex:InfiniteMinimalLocus}) or may not (\Cref{ex:funnymonomial}) constitute minimal supports of $F$.
\end{remark}

\begin{example} \label{ex:runningexample}
    We consider once more $F = x_0^2x_1 + x_0x_1x_2 + x_1^3 \in \S_3$.
    In \Cref{ex:ex1-A,ex:ex1-C} we proved that this form has no local GADs of size $\leq 3$, and three local GADs of minimal size $4$.
    We also observed in \Cref{rmk:norank5} that it has no local GADs of size $5$.
    Since $6$ is the maximal possible size for a local GAD by \Cref{prop:genericrank}, we conclude that every local GAD $F = F \ell^0$, for any $\ell \in \S_1 \setminus\{x_1,x_0,x_0+x_2\}$, evinces a scheme of length $6$.
\end{example}

\begin{example} \label{ex:InfiniteMinimalLocus}
    Let us consider $F = x_0x_1+x_0x_2+x_1x_2 \in \S_2$.
    We check that the symbolic rank of $\mathcal{I}_{F,\ell}(\c)$ is $4$, which is the maximum that can be achieved by \Cref{prop:genericrank}.
    Hence, a generic $\ell \in \S_1$ will evince a length-$4$ natural apolar scheme to $F$.
    Furthermore, we verify that the $3 \times 3$ minors of $\mathcal{I}_{F,\ell}(\c)$ define the unit ideal, so the only possible sizes for local GADs of $F$ are $3$ and $4$.
    A straightforward computation shows that the non-zero $4 \times 4$ minors are all multiples of 
    \[ g(\c) = \c_1^2 - 2\c_1\c_2 - 2\c_1 + \c_2^2 - 2\c_2 + 1. \]
    In particular, there is a $1$-dimensional locus of minimal supports for $F$.
    Solving $g(\c)=0$ for the indeterminate $\c_2$, we find that $\{ \ell = x_0 + \gamma_1 x_1 + (1 \pm \sqrt{\gamma_1})^2x_2 \}_{\gamma_1 \in \k}$ are all minimal supports, evincing length-$3$ natural apolar schemes to $F$.
\end{example}

\begin{example} \label{ex:funnymonomial}
    We consider $F = x_0^2(x_0+x_1)(x_0+x_2) \in \S_4$.
    The symbolic rank of $\mathcal{I}_{F,\ell}(\c)$ is 9, which coincides with the bound given by \Cref{prop:genericrank}.
    By considering the minors of $\mathcal{I}_{F,\ell}(\c)$, we conclude that $F$ has a unique minimal local GAD supported at $x_0$, with size $4$, no local GADs of size $5$, two local GADs of size $6$, supported at $x_0+x_1$ and $x_0+x_2$, and no local GADs of size $7$.
    However, one can check that we have two $1$-dimensional families of supports, namely $\{ \ell=x_0+\c_1 x_1 \}_{\c_1 \neq 0}$ and $\{ \ell = x_0+\c_2 x_2 \}_{\c_2 \neq 0}$, evincing schemes with Hilbert function $(1,3,5,7,8,8, \dots)$, namely, their size is strictly smaller than the generic local GAD size.
\end{example}

\subsection{Rank minimization} \label{sec:RankMinimiz}

In this section, we discuss a computational realization of Line \ref{algo-hard} in \Cref{algo:localgad}.
As observed in \Cref{rmk:difficultPart}, computing all $r \times r$ minors of $\mathcal{I}_{F,\ell}(\gamma)$ is infeasible even in moderately sized cases.
We will address forms that admit a \emph{finite} number of minimal supports. 
To this extent, we collect minors from $\mathcal{I}_{F,\ell}(\gamma)$ until they define a zero-dimensional variety in the parameter space $\k[\gamma]$.
The efficiency of such an approach heavily depends on how these minors are chosen in practice.
We tested the following three methods:
\begin{enumerate}[leftmargin=0.7cm]
    \item[(A)] Extracting random minors, obtained from a uniform choice of rows and columns.
    \item[(B)] Uniformly sampling pairs of rows and columns $(y^\a,y^\b)$ such that $|\a+\b|=\deg(f_{\ell})$.
    \item[(C)] Selecting random chains of consecutive contractions from a leading monomial $x^{\b}$ of $f_{\ell}$: starting from $(\a_0,\b_0)=(1,\b)$, define inductively 
    \[ \a_{m+1} = (\a_{m,0}, \dots, \a_{m,k}+1, \dots, \a_{m,n}), \quad \b_{m+1} = (\b_{m,0}, \dots, \b_{m,k}-1, \dots, \b_{m,n}), \]
    where $0 \leq k \leq n$ is chosen with probability $\P(k = j) = \frac{ \beta_{m,j} }{|\b_m|}$.
    If we collected less than $r$ rows and columns indices $(y^{\a_m},y^{\b_m})$ when $\b_m$ reaches the zero-vector, we start over with another monomial of $f_{\ell}$, and keep only indices that have not appeared before.
\end{enumerate}

A visual presentation of the three considered strategies is given in \Cref{fig:threeApproaches}.
The approach (A) is the naive minor selection, while (B) exploits the Hankel structure of $\mathcal{I}_{F,\ell}(\gamma)$ to create block-diagonal submatrices.
This substantially reduces the choices of vanishing minors and simplifies the determinantal computations.
The more sophisticated method (C) leverages the observations of \Cref{rmk:HankelStructure} to construct submatrices whose entries belong to polynomial rings with relatively few variables.
This simplifies algebraic manipulations of the resulting minor ideal, such as computing its dimension and determining its roots.
Moreover, it further reduces the vanishing minors for \emph{sparse} polynomials, i.e., those with only a few non-zero coefficients.

We report the results of our computational experiments in \Cref{tab:numtesting}.
The timings heuristically suggest that considering the individual coordinate charts separately is more efficient than computing a general coordinate transformation.
The MAGMA \cite{magma} and Macaulay2 \cite{M2} code for reproducing the listed examples are available at \cite{code}.

\begin{figure}[ht]
\centering

\scalebox{.9}{
\begin{subfigure}{0.3\textwidth}
\centering
\begin{tikzpicture}[scale=0.4]

\foreach \x in {2,5,7,9} {
    \foreach \y in {2,4,7,9} {
        \fill[gray!25] (\x-1,\y-1) rectangle (\x,\y);
    }
}

\draw[gray!50,thin] (1,4) -- (1,10);
\draw[gray!50,thin] (3,7) -- (3,10);
\draw[gray!50,thin] (6,9) -- (6,10);

\draw[gray!50,thin] (0,9) -- (6,9);
\draw[gray!50,thin] (0,7) -- (3,7);
\draw[gray!50,thin] (0,4) -- (1,4);

\draw[thick] (0,0) rectangle (10,10);

\draw[thick]
(1,0)--(1,4)--(3,4)--(3,7)--(6,7)--(6,9)--(10,9);

\coordinate (A) at (1.5,3.5);   
\coordinate (B) at (8.5,1.5);   
\coordinate (C) at (6.5,6.5);   
\coordinate (D) at (4.5,8.5);   

\fill (A) circle (3pt);
\fill (B) circle (3pt);
\fill (C) circle (3pt);
\fill (D) circle (3pt);

\end{tikzpicture}
\caption*{(A)}
\end{subfigure}
\begin{subfigure}{0.3\textwidth}
\centering
\begin{tikzpicture}[scale=0.4]

\foreach \x in {2,4,5,9} {
    \foreach \y in {5,8,9,10} {
        \fill[gray!25] (\x-1,\y-1) rectangle (\x,\y);
    }
}

\draw[gray!50,thin] (1,4) -- (1,10);
\draw[gray!50,thin] (3,7) -- (3,10);
\draw[gray!50,thin] (6,9) -- (6,10);

\draw[gray!50,thin] (0,9) -- (6,9);
\draw[gray!50,thin] (0,7) -- (3,7);
\draw[gray!50,thin] (0,4) -- (1,4);

\draw[thick] (0,0) rectangle (10,10);

\draw[thick]
(1,0)--(1,4)--(3,4)--(3,7)--(6,7)--(6,9)--(10,9);

\coordinate (A) at (8.5,9.5);   
\coordinate (B) at (4.5,7.5);   
\coordinate (C) at (3.5,8.5);   
\coordinate (D) at (1.5,4.5);   

\fill (A) circle (3pt);
\fill (B) circle (3pt);
\fill (C) circle (3pt);
\fill (D) circle (3pt);

\end{tikzpicture}
\caption*{(B)}
\end{subfigure}
\begin{subfigure}{0.3\textwidth}
\centering
\begin{tikzpicture}[scale=0.4]

\foreach \x in {1,2,4,8} {
    \foreach \y in {3,6,8,10} {
        \fill[gray!25] (\x-1,\y-1) rectangle (\x,\y);
    }
}

\draw[gray!50,thin] (1,4) -- (1,10);
\draw[gray!50,thin] (3,7) -- (3,10);
\draw[gray!50,thin] (6,9) -- (6,10);

\draw[gray!50,thin] (0,9) -- (6,9);
\draw[gray!50,thin] (0,7) -- (3,7);
\draw[gray!50,thin] (0,4) -- (1,4);

\draw[thick] (0,0) rectangle (10,10);

\draw[thick]
(1,0)--(1,4)--(3,4)--(3,7)--(6,7)--(6,9)--(10,9);

\coordinate (A) at (0.5,2.5);
\coordinate (B) at (1.5,5.5);
\coordinate (C) at (3.5,7.5);
\coordinate (D) at (7.5,9.5);

\draw[blue!60,thin] (A)--(B)--(C)--(D);

\fill (A) circle (3pt);
\fill (B) circle (3pt);
\fill (C) circle (3pt);
\fill (D) circle (3pt);
\end{tikzpicture}
\caption*{(C)}
\end{subfigure}
}

\captionsetup{width=0.9\textwidth}
\caption{Different methods for extracting minors from $\mathcal{I}_{F,\ell}(\gamma)$: considering random minors (A), selecting block-diagonal submatrices (B), and following contraction chains (C).}
  \label{fig:threeApproaches}
\end{figure}
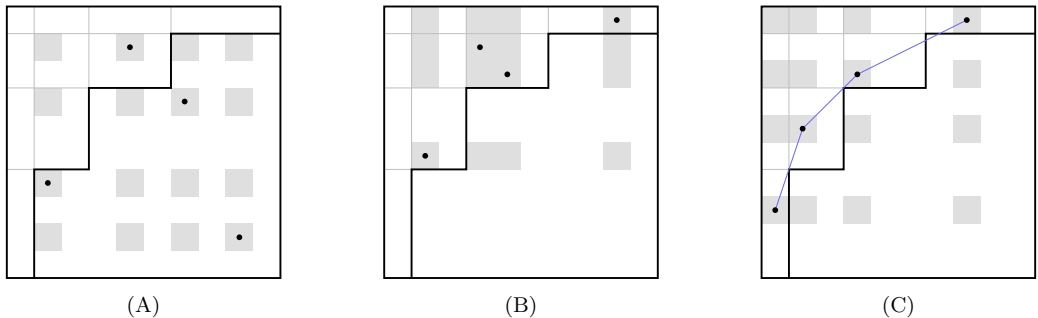

\begin{table}[ht]
\centering
\scalebox{.89}{ \begin{tabular}{| c c c c | c c c | c c c |}
\cline{5-10}
\multicolumn{4}{c|}{} 
& \multicolumn{3}{c|}{Plain} 
& \multicolumn{3}{c|}{Generic chart} \\
\hline
Form & \# supports & local rank & repetitions
& (A) & (B) & (C) 
& (A) & (B) & (C) \\
\hline
$(x_0^2+x_0x_2+x_1^2)x_1$ & 3 & 4 & $\times$100 & 16.05 & 0.82 & \textbf{0.03} & 55.82 & 7.15 & \textbf{5.23} \\
$x_0^2x_1x_2$ & 1 & 4 & $\times$10 & 10.53 & 0.84 & \textbf{0.01} & 14.13 & \textbf{3.34} & \textbf{3.18} \\
$x_0^2(x_1+x_2)+x_1^2(x_0+x_2)+x_2^2(x_0+x_1)$ & 9 & 5 & $\times$20 & 11.18 & \textbf{0.50} & \textbf{0.44} & 60.50 & 25.11 & \textbf{13.74} \\
$x_0^2x_1^2x_2$ & 2 & 6 & $\times$5 & - & 17.98 & \textbf{0.01} & - & 113.35 & \textbf{54.64} \\
$x_0^3 + x_1^3 + x_2^3 + x_0x_3x_2 + x_0x_3^2$ & 9 & 7 & $\times$3 & - & 3.16 & \textbf{0.93} & - & - & - \\
$x_0(x_0^3 + x_0^2x_1 + x_0x_2^2 + x_1^3 + x_2^3 + x_3^3)$ & 1 & 8 & $\times$3 & - & 143.63 & \textbf{21.15} & - & - & -  \\
\hline
\end{tabular} }
\captionsetup{width=\textwidth}
\caption{Numerical results of the computational testing with the methods (A), (B), and (C) discussed above.
For each form, we report the total running time (in seconds) to repeatedly compute the minimal supports (a \emph{repetitions} number of times).
The columns labeled \emph{Plain} correspond to a direct application of \Cref{algo:localgad}, whereas in \emph{Generic chart} it was preceded by a generic change of coordinates. Computations that exceeded the time limit (300\,s) are marked with a dash. } \label{tab:numtesting}
\end{table}


\section{Comparison with previous methods} \label{sec:comparison}

\subsection{Extension-based algorithms}
Sylvester's algorithm for binary forms has been extended to an exact Waring decomposition algorithm \cite{BCMT10}, and further generalized to an algorithm for computing minimal apolar schemes \cite{BT20} and minimal GADs \cite{BMT25}.
These ideas can be further refined for computing minimal \emph{local} apolar schemes \cite{BF25}.
Roughly speaking, since a minimal local apolar scheme at $x_0$ is locally Gorenstein \cite{BB14} with dual generator extending $f_{x_0}$ \cite{BJMR18}, one parametrizes functionals in $(\R')^*$ that agree with $f_{x_0}$ up to degree $d$.
Exploiting the properties of Hankel operators, these objects are determined by imposing the commutation of the matrices representing the multiplication by the variables in the local algebra, upon a previous choice of basis.
See \cite[Section 6]{BT20} for more details.

The approach of this paper is fundamentally different from these methods: instead of parametrizing tensor extensions and computing commuting conditions, which leads to the solution of (possibly many) systems of quadrics in a large number of extension parameters, we focus on minimizing the dimension of the symbolic inverse system.
This is a basis-free strategy that requires the solution of a typically overdetermined system of polynomials with degree at most $(r+1)$ in $n$ variables.

We compare the two methods with the form $F = x_0^2x_1 + x_0x_1x_2 + x_1^3 \in \S_3$ of \Cref{ex:ex1-C}. 
Following \cite[Algorithm 3]{BT20}, we consider a global functional $\Lambda\in \k[y_1,y_2]^*$ extending $f_{x_0}$, i.e., $\Lambda|_{\leq 3}=f_{x_0}$.
Its Hankel operator $H_{\Lambda}$, defined as 
\[H_\Lambda: \k[y_1,y_2] \to \k[y_1,y_2]^*, \qquad p\mapsto p\star \Lambda,\]
is represented by an infinite matrix, with rows and columns indexed by all monomials in $\k[y_1,y_2]$, such that the $(y^\a,y^\b)$-entry is $\Lambda(y^{\alpha+\beta})$.
We truncate this matrix to rows and columns indexed by monomials of degree at most $\deg F=3$:

\begin{equation} \label{eq:Hlambda}
 H_\Lambda= \hspace{-0.3cm} \scalebox{0.9}{$ \begin{array}{c@{\;}c} & \begin{array}{cccccccccc} \hspace{0.1cm} 1 \hspace{0.25cm} & y_1\hspace{0.45cm} & y_2\hspace{0.45cm} & y_1^2\hspace{0.3cm} & y_1y_2\hspace{0.3cm} & y_2^2\hspace{0.45cm} & y_1^3\hspace{0.3cm} & y_1^2y_2\hspace{0.15cm} & y_1y_2^2 \hspace{0.3cm} & y_2^3 \hspace{0.4cm}
\end{array} \\[6pt] \begin{array}{c} 1\\ y_1\\ y_2\\ y_1^2\\ y_1y_2\\ y_2^2\\ y_1^3\\ y_1^2y_2\\ y_1y_2^2\\ y_2^3 \end{array} & \left( \begin{array}{cccccccccc} 0 & 2 & 0 & 0 & 1 & 0 & 6 & 0 & 0 & 0 \\ 2 & 0 & 1 & 6 & 0 & 0 & h_{y_1^4} & h_{y_1^3y_2} & h_{y_1^2y_2^2} & h_{y_1y_2^3} \\ 0 & 1 & 0 & 0 & 0 & 0 & h_{y_1^3y_2} & h_{y_1^2y_2^2} & h_{y_1y_2^3} & h_{y_2^4} \\ 0 & 6 & 0 & h_{y_1^4} & h_{y_1^3y_2} & h_{y_1^2y_2^2} & h_{y_1^5} & h_{y_1^4y_2} & h_{y_1^3y_2^2} & h_{y_1^2y_2^3} \\ 1 & 0 & 0 & h_{y_1^3y_2} & h_{y_1^2y_2^2} & h_{y_1^2y_2^3} & h_{y_1^4y_2} & h_{y_1^3y_2^2} & h_{y_1^2y_2^3} & h_{y_1y_2^4}\\ 0 & 0 & 0 & h_{y_1^2y_2^2} & h_{y_1^2y_2^3} & h_{y_1^4y_2} & h_{y_1^3y_2^2} & h_{y_1^2y_2^3} & h_{y_1y_2^4} & h_{y_2^5}\\ 6 & h_{y_1^4} & h_{y_1^3y_2} & h_{y_1^5} & h_{y_1^4y_2} & h_{y_1^3y_2^2} & h_{y_1^6} & h_{y_1^5y_2} & h_{y_1^4y_2^2} & h_{y_1^3y_2^3}\\ 0 & h_{y_1^3y_2} & h_{y_1^5} & h_{y_1^4y_2} & h_{y_1^3y_2^2} & h_{y_1^6} & h_{y_1^5y_2} & h_{y_1^4y_2^2} & h_{y_1^3y_2^3} & h_{y_1^2y_2^4}\\ 0 & h_{y_1^5} & h_{y_1^4y_2} & h_{y_1^3y_2^2} & h_{y_1^6} & h_{y_1^5y_2} & h_{y_1^4y_2^2} & h_{y_1^3y_2^3} & h_{y_1^2y_2^4} & h_{y_1y_2^5}\\ 0 & h_{y_1^4y_2} & h_{y_1^3y_2^2} & h_{y_1^6} & h_{y_1^5y_2} & h_{y_1^4y_2^2} & h_{y_1^3y_2^3} & h_{y_1^2y_2^4} & h_{y_1y_2^5} & h_{y_2^6}\\ \end{array} \right). \end{array} $}
\end{equation}

Since $\Lambda$ extends $f_{x_0}$, the entry $\Lambda(y^{\alpha+\beta})$ agrees with the corresponding coefficient of $f_{x_{0}}$ if $|\alpha+\beta|\leq d$, otherwise it is the symbolic moment variable $h_{y^{\alpha+\beta}}$.
Following Line 3 in \cite[Algorithm 3]{BT20}, we need to test (as bases of the local algebras) three complete staircases, namely $\left\{1,y_1,y_2,y_2^2\right\},\, \left\{1,y_1,y_2,y_1y_2\right\}$, and $\left\{1,y_1,y_2,y_1^2\right\}$.
Among them, only $B=\{1,y_1,y_2,y_1y_2\}$ corresponds to a full rank submatrix in the Hankel matrix $H_\Lambda$:
\[H_\Lambda^B = 
 \left(\!\begin{array}{cccc}
      0&2&0&1\\
      2&0&1&0\\
      0&1&0&0\\
      1&0&0&h_{y_1^2y_2^2}
      \end{array}\!\right).\]
The determinant of $H_\Lambda^B$ is nonzero for every value of $h_{y_1^2y_2^2}$, therefore it does not impose any open condition on this moment variable. From the Hankel matrix we compute the multiplication matrices $\mathbb{M}_{y_1}^B$ and $\mathbb{M}_{y_2}^B$ using \cite[Prop. 3.9]{BCMT10}:

\[\resizebox{\textwidth}{!}{$
(\mathbb{M}_{y_1}^B)^t=\left(\!\begin{array}{cccc}
      0&1&0&0\\
      h_{y_1^3y_2}&0&-2\,h_{y_1^3y_2}+6&0\\
      0&0&0&1\\
      2\,h_{y_1^2y_2^2}^{2}+h_{y_1^3y_2^2}&h_{y_1^2y_2^2}&-4\,h_{y_1^2y_2^2}^{2}+h_{y_1^3y_2}-2\,h_{y_1^3y_2^2}&-2\,h_{y_1^2y_2^2}
      \end{array}\!\right),\:(\mathbb{M}_{y_2}^B)^t=\left(\!\begin{array}{cccc}
      0&0&1&0\\
      0&0&0&1\\
      h_{y_1y_2^3}&0&-2\,h_{y_1y_2^3}&0\\
      2\,h_{y_1^2y_2^2}h_{y_1y_2^3}+h_{y_1^2y_2^3}&h_{y_1y_2^3}&-4\,h_{y_1^2y_2^2}h_{y_1y_2^3}+h_{y_1^2y_2^2}-2\,h_{y_1^2y_2^3}&-2\,h_{y_1y_2^3}
      \end{array}\!\right).
      $}\]
We search for symbols to make them commute and, since we only look for local schemes, we also impose that these matrices have a unique eigenvalue (see \cite[Remark 2.9]{BF25}). 
All these conditions define a zero-dimensional polynomial system in $\k[h_{y_1^3y_2},\ldots, h_{y_1^3y_2^2}]$ with two solutions, representing two local schemes of length $4$ apolar to $F$.
After substituting these solutions in $(\mathbb{M}_{y_1}^B)^t$ and $(\mathbb{M}_{y_2}^B)^t$, we find the minimal supports $(0,0)$ and $(0,1)$ from their simultaneous eigenvectors.
These points correspond to the linear forms $x_0$ and $x_0+x_2$, which agree with the minimal supports found in \Cref{ex:ex1-C}. 

However, understanding local apolar schemes of length $5$ with this method is significantly more complicated.
There are $5$ complete staircases to test, and for each one, the commutation of the $5\times 5$ matrices $(\mathbb{M}_{y_1}^B)^t$ and $(\mathbb{M}_{y_2}^B)^t$ is subject to the open condition given by $\det H_\Lambda^B\neq 0$, which now involves moment variables.
We would therefore have to show the inconsistency of $5$ polynomial systems with open conditions.
\emph{A posteriori}, the solution will again agree with that of \Cref{ex:ex1-C}, but the computations involved are highly non-trivial.

\begin{remark}
    Minimal local GADs can be employed to construct small local apolar schemes, but not to prove their minimality.
    In its current formulation, \Cref{algo:localgad} cannot detect schemes that are not evinced by a local GAD of a form $F \in \S_d$, which occur precisely when the associated local Artinian Gorenstein algebra has socle degree exceeding $d$.
    By contrast, the algorithms in \cite{BF25, BT20} could detect the minimal apolar schemes even with a larger socle.
    For example, the form in \cite[Example 4.6]{BOT24} has a unique minimal scheme of length $6$, which is local and not evinced by any GAD of $F$.
    This length-$6$ scheme can be efficiently computed with \cite{BF25, BT20}, whereas the approach presented in this paper may only produce its minimal local GADs, which have rank $7$.
    Furthermore, in this specific example the determinantal approach proposed in this paper is cumbersome, while the previous eigenmethods \cite{BF25, BT20} only involve linear operations.
    Thus, neither method is uniformly superior to the other, but their relative effectiveness depends on the geometry of the minimal GADs/schemes of the given $F \in \S_d$.
    Currently, we have no general criterion to determine a priori which approach will be preferable.
\end{remark}

\subsection{Catalecticant minors} \label{subsec:catalecticant}
The $i$-th catalecticant of $F\in \S_d$, for $0 \leq i \leq d$, is the map
\[ \R_i\to \S_{d-i}, \qquad p \mapsto p\contract F_{\rm dp}. \]
We denote by $\operatorname{Cat}_F^{i}$ the \emph{$i$-th catalecticant matrix}, namely the matrix representing the $i$-th catalecticant of $F$ in the standard monomial bases.
It is immediate to check that $\operatorname{Cat}_F^{i}=(\operatorname{Cat}_F^{d-i})^t$.
The set of polynomials in $\S_d$ whose $i$-th Catalecticant matrix has rank at most $r$ forms an algebraic variety, denoted by $\varUpsilon^{i, d-i}_r(\mathbb PV)$ in \cite{BB14}, which is an instance of a \emph{catalecticant variety} \cite[Def. 1.4]{IK99}. 

We observe that, up to a change of coordinates sending the support of the GAD to $x_0$, the inverse system matrix $\mathcal{I}_{F,x_0}$ contains all $\{\operatorname{Cat}_F^{i}\}_{0 \leq i \leq d}$.
To see that, it suffices to use the fact that for every $G \in \R_i$ and $F\in \S_{d+i}$, the degree-$d$ tails of $(G\contract F)|_{x_0 = 1}$ and $G|_{x_0 = 1} \contract F|_{x_0 = 1}$ are equal \cite[Lemma 2]{BJMR18}.

For example, let $F=x_0^2x_1+x_0x_1x_2+x_1^3\in \S_3$ be the form considered in \Cref{ex:ex1-A}. We have 
\[(\operatorname{Cat}_F^1)^t=\operatorname{Cat}_F^2=\begin{pmatrix}
0 & 2 & 0 & 0 & 1 & 0 \\
2 & 0 & 1 & 6 & 0 & 0 \\
0 & 1 & 0 & 0 & 0 & 0 
\end{pmatrix},\]
and $(\operatorname{Cat}_F^0)^t=\operatorname{Cat}_F^3=(0,2,0,0,1,0,6,0,0,0)$, with respect to the monomial bases in the lexicographic order. 
These catalecticant matrices are all submatrices of
\[\mathcal{I}_{F, x}=
\begin{pmatrix}
0 & 2 & 0 & 0 & 1 & 0 & 6 & 0 & 0 & 0 \\
2 & 0 & 1 & 6 & 0 & 0 & 0 & 0 & 0 & 0 \\
0 & 1 & 0 & 0 & 0 & 0 & 0 & 0 & 0 & 0 \\
0 & 6 & 0 & 0 & 0 & 0 & 0 & 0 & 0 & 0 \\
1 & 0 & 0 & 0 & 0 & 0 & 0 & 0 & 0 & 0 \\
0 & 0 & 0 & 0 & 0 & 0 & 0 & 0 & 0 & 0 \\
6 & 0 & 0 & 0 & 0 & 0 & 0 & 0 & 0 & 0 \\
0 & 0 & 0 & 0 & 0 & 0 & 0 & 0 & 0 & 0 \\
0 & 0 & 0 & 0 & 0 & 0 & 0 & 0 & 0 & 0 \\
0 & 0 & 0 & 0 & 0 & 0 & 0 & 0 & 0 & 0
\end{pmatrix}.
\]
Consequently, when $F$ has local GAD-rank $r$, every $(r+1) \times (r+1)$ minor of any $\operatorname{Cat}_F^i$ vanishes.
However, in this example, the local GAD-rank of $F$ is $4$, while the largest catalecticant matrices are $\operatorname{Cat}_F^1$ and $\operatorname{Cat}_F^2$, which have both rank $3$.
Therefore, we cannot deduce the local GAD-rank of $F$ by looking at its catalecticant matrices separately.

We also note that the Hankel matrix $H_\Lambda$ in \cref{eq:Hlambda} coincides with $\mathcal{I}_{F,x_0}$, if we substitute all the moment variables $h_{y^\alpha}$ by zero.
This is a general fact, since the $\star$-action agrees with contraction (see \Cref{prop:equivgenerators}), 
and the infinite operator tail arising from the exponential term in \cref{eq: exponential} vanishes, since the local algebra is supported at zero ($\ell = x_0$).


\section{Conclusions and future directions} \label{sec:conclusion}

We have introduced an independent and complementary perspective to existing methods for the study of minimal local GADs, highlighting concrete cases where this new method allows for a finer analysis of the associated local algebras.
This determinantal method, based on symbolic inverse systems, reduces the computation of minimal supports to the symbolic rank minimization of a Hankel matrix, for which we designed and experimentally tested different routines.
We believe these minor-selection techniques could be further refined and optimized for special choices of $F \in \S_d$, which appears to be a promising avenue for future research in order to minimal GADs of forms with larger parameters $n$ and $d$.

In \Cref{subsec:catalecticant}, we observed that catalecticant matrices may be too small to recover the local GAD-rank of a form $F$.
However, when this rank is sufficiently small compared to $d$, these matrices should contain enough information to 
determine the local GAD-rank of $F$.
We believe it is worth investigating how to find these conditions explicitly, inspired by the similar relation between cactus and catalecticant varieties found in \cite[Thm. 1.7]{BB14}.

Another promising direction for future research is to adapt the present method to construct local schemes with a prescribed Hilbert function, similarly to \cite{BJMR18}, or more refined invariants such as a specified \emph{Jordan Degree Type} \cite{IMM22}.
We note that the Hilbert functions of both the Artinian local algebra (where the graded structure is given by the filtration of powers of the maximal ideal) and the projective punctual scheme (defined by the homogenization of the affine ideal) can be understood solely in terms of the matrix $\mathcal{I}_{F,x_0}$ (see, e.g., \cite{CI12} for the relationship between the local inverse system $\R'\contract f_{\ell}$ and the Hilbert function of the defining scheme).

Finally, it would be natural to investigate whether intrinsic constraints govern the distribution of local GAD-ranks. More precisely, given a finite sequence $(a_1, a_2, \ldots, a_m) \subset \N \cup \{\infty\}$, with $a_1 \in \{0,1\}$, one may ask whether there are positive integers $n,d$ and a form $F \in \S_d$ such that, for each $k$, the number of local GADs of $F$ of size $k$ is exactly $a_k$.

\subsection*{Acknowledgements}
We thank A. Iarrobino and the organizers of \emph{Jordan Types of Artinian Algebras and Geometry of Punctual Hilbert Schemes}, held in celebration of his 80th birthday.
We are grateful to E. Barrilli, A. Bernardi, and B. Mourrain for useful discussions, and to N. Vannieuwenhoven for advice on symbolic rank minimization.
We also thank the anonymous referees for their comments, which led to a significant improvement of the manuscript.

This work has been supported by European Union’s HORIZON–MSCA2023-DN-JD programme under the Horizon Europe (HORIZON) Marie Sklodowska-Curie Actions, grant agreement 101120296 (TENORS), by the Research Foundation - Flanders (FWO: 12ZZC23N), and by the BOF project C16/21/002 by the Internal Funds
KU Leuven.


\begin{thebibliography}{ccccc}

\bibitem{BB12} E. Ballico and A. Bernardi,
\emph{Decomposition of homogeneous polynomials with low rank},
Math. Z. 271, 2012, pp. 1141--1149.

\bibitem{BMT25} E. Barrilli, B. Mourrain, and D. Taufer,
\emph{Generalized Additive Decompositions of Symmetric Tensors},
arXiv:2510.25681, 2025.

\bibitem{BBM14} A. Bernardi, J. Brachat, and B. Mourrain,
\emph{A comparison of different notions of ranks of symmetric tensors},
Linear Algebra Appl. 460, 2014, pp. 205--230.

\bibitem{BCGI07} A. Bernardi, M. V. Catalisano, A. Gimigliano, and M. Idà,
\emph{Osculating varieties of Veronese varieties and their higher secant varieties},
Canad. J. Math. 59, 2007, pp. 488--502.

\bibitem{BF25} A. Bernardi and O. R. Fité,
\emph{A refinement on the local cactus rank algorithm},
arXiv:2508.15062, 2025.

\bibitem{BJMR18} A. Bernardi, J. Jelisiejew, P. M. Marques, and K. Ranestad,
\emph{On polynomials with given Hilbert function and applications},
Collect. Math. 69, 2018, pp. 39--64.

\bibitem{BOT24} A. Bernardi, A. Oneto, and D. Taufer,
\emph{On schemes evinced by generalized additive decompositions and their regularity},
J. Math. Pures Appl. 188, 2024, pp. 446--469.
    
\bibitem{BT20} A. Bernardi and D. Taufer,
\emph{Waring, tangential and cactus decompositions},
J. Math. Pures Appl. 143, 2020, pp. 1--30.

\bibitem{magma} W. Bosma, J. Cannon, and C. Playoust,
\emph{The Magma algebra system. I. The user language},
J. Symb. Comput. 24(3-4), 1997, pp. 235--265.

\bibitem{BCMT10} J. Brachat, P. Comon, B. Mourrain, and E. Tsigaridas, 
\emph{Symmetric tensor decomposition},
Linear Algebra Appl. 433, 2010, pp. 1851--1872.

\bibitem{BB14} W. Buczyńska and J. Buczyński,
\emph{Secant varieties to high degree Veronese reembeddings, catalecticant matrices and smoothable Gorenstein schemes},
J. Algebraic Geom. 23(1), 2014, pp. 63--90.

\bibitem{BK24} J. Buczyński and H. Keneshlou,
\emph{Cactus scheme, catalecticant minors, and scheme theoretic equations},
arXiv:2410.21908, 2024.

\bibitem{BR13} A. Bernardi and K. Ranestad, \emph{On the cactus rank of cubic forms}, Journal of Symbolic
Computation 50 (2013), 291–297.

\bibitem{C05} E. Carlini,
\emph{Reducing the number of variables of a polynomial},
in Algebraic Geometry and Geometric Modeling, Springer, 2005, pp. 237--247.

\bibitem{CM23} A. Casarotti and M. Mella,
\emph{From non-defectivity to identifiability},
J. Eur. Math. Soc. 25(3), 2023, pp. 913--931.

\bibitem{COV14} L. Chiantini, G. Ottaviani, and N. Vannieuwenhoven,
\emph{An algorithm for generic and low-rank specific identifiability of complex tensors},
SIAM J. Matrix Anal. Appl. 35, 2014, pp. 1265--1287.

\bibitem{CI12} Y. H. Cho and A. Iarrobino,
\emph{Inverse systems of zero-dimensional schemes in $\P^n$},
J. Algebra 366, 2012, pp. 42--77.

\bibitem{CLOS07} D. Cox, J. Little, and D. O'Shea,
\emph{Ideals, Varieties, and Algorithms: An Introduction to Computational Algebraic Geometry and Commutative Algebra},
Springer, 2007.


\bibitem{G96} A. V. Geramita, 
\emph{Inverse systems of fat points: Waring’s problem, secant varieties of Veronese varieties and parameter spaces for Gorenstein ideals},
Queen’s Papers in Pure and Appl. Math. 102, 1996, pp. 2-114.

\bibitem{GKT25} F. Gesmundo, L. Kayser, and S. Telen,
\emph{Hilbert functions of chopped ideals},
J. Algebra 666, 2025, pp. 415--445.

\bibitem{M2} D. R. Grayson and M. E. Stillman,
\emph{Macaulay2, a software system for research in algebraic geometry},
Available at \url{http://www2.macaulay2.com}.
          
\bibitem{Iar84} A. Iarrobino,
\emph{Compressed algebras: Artin algebras having given socle degrees and maximal length},
Trans. Amer. Math. Soc., 285(1), 1984, pp. 337--378.

\bibitem{IE78} A. Iarrobino and J. Emsalem,
\emph{Some zero-dimensional generic singularities; finite algebras having small tangent space},
Compositio Math. 36(2), 1978, pp. 145--188.

\bibitem{IK99} A. Iarrobino and V. Kanev,
\emph{Power sums, Gorenstein algebras, and determinantal loci},
Springer Science \& Business Media, 1999. 

\bibitem{IMM22} A. Iarrobino, P. M. Marques, and C. McDaniel,
\emph{Artinian algebras and Jordan type},
J. Commut. Algebra 14(3), 2022, pp. 365--414.

\bibitem{L11} J. M. Landsberg, 
\emph{Tensors: Geometry and Applications},
Graduate Studies in Mathematics 128, American Mathematical Society, 2011.

\bibitem{LKS25} J. Lindberg, J. Kileel, and B. Shi, 
\emph{Efficient Tensor Decomposition via Moment Matrix Extension},
arXiv:2506.22564, 2025.

\bibitem{M19} F. S. Macaulay,
\emph{The algebraic theory of modular systems},
in Cambridge tracts in mathematics and mathematical physics 19, Cambridge University Press, 1916. 

\bibitem{M18} B. Mourrain,
\emph{Polynomial-exponential decomposition from moments},
Found. Comput. Math. 18, 2018, pp. 1435–1492.

\bibitem{MPT25} M. Mula, F. Pintore, and D. Taufer,
\emph{The Hessian of elliptic curves},
arXiv:2407.17042, 2025.


\bibitem{code} Code repository: \href{https://github.com/DTaufer/MinimalLocalGAD/}{github.com/DTaufer/MinimalLocalGAD/}
\end{thebibliography}

\end{document}